\DeclareMathOperator{\Aut}{Aut}  
\DeclareMathOperator{\rk}{rk}
\newcommand{\Fraisse}{Fra\"iss\'e}
\DeclareMathOperator{\td}{td}  
\DeclareMathOperator{\loc}{Loc}   
\DeclareMathOperator{\Mat}{Mat}  
\DeclareMathOperator{\ldim}{ldim}  
\DeclareMathOperator{\ecl}{ecl} 
\newcommand{\restrict}[1]{\ensuremath{\!\!\upharpoonright_{#1}}}
\newcommand{\N}{\ensuremath{\mathbb{N}}}
\newcommand{\Z}{\ensuremath{\mathbb{Z}}}
\newcommand{\Q}{\ensuremath{\mathbb{Q}}}
\newcommand{\C}{\ensuremath{\mathbb{C}}}
\newcommand{\Cexp}{\ensuremath{\mathbb{C}_{\mathrm{exp}}}}
\newcommand{\Loo}{\ensuremath{L_{\omega_1,\omega}}}
\newcommand{\ACF}{\ensuremath{\mathrm{ACF}}}
\newcommand{\ga}{\ensuremath{\mathbb{G}_\mathrm{a}}}   
\newcommand{\gm}{\ensuremath{\mathbb{G}_\mathrm{m}}}  
\renewcommand{\phi}{\varphi}
\renewcommand{\le}{\ensuremath{\leqslant}}
\renewcommand{\ge}{\ensuremath{\geqslant}}
\newcommand{\tuple}[1]{\ensuremath{\langle #1 \rangle}}
\newcommand{\class}[2]{\ensuremath{\left\{ #1 \,\left|\, #2 \right.\right\}}}
\newcommand{\onto}{\twoheadrightarrow}
\newcommand{\subs}{\subseteq} 
\newcommand{\minus}{\ensuremath{\smallsetminus}}
\newcommand{\gen}[1]{\ensuremath{\left\langle #1 \right\rangle}} 
\newcommand{\cross}{\ensuremath{\times}}
\newcommand{\leteq}{\mathrel{\mathop:}=}
\DeclareMathOperator{\Gcl}{\Gamma cl}  
\newcommand{\Fexp}{\ensuremath{F_\mathrm{exp}}}
\newcommand{\B}{\ensuremath{\mathbb{B}}}
\newcommand{\Bexp}{\ensuremath{\B_\mathrm{exp}}}
\newcommand{\Fdiff}{\ensuremath{F_{\mathrm{diff}}}}
\newcommand{\h}{\ensuremath{\mathscr{H}}}
\newcommand{\G}{\ensuremath{\mathcal{G}}}
\newcommand{\DCF}{\ensuremath{\mathrm{DCF}}}
\newcommand{\TEDE}{\ensuremath{T_{\mathrm{EDE}}}}
\newcommand{\FEDE}{\ensuremath{F_{\mathrm{EDE}}}}
\newcommand{\GDE}{\ensuremath{\Gamma_{\mathrm{DE}}}}
\newcommand{\GAE}{\ensuremath{\Gamma_{\mathrm{AE}}}}    
\newcommand{\CAE}{\ensuremath{\C_{\mathrm{AE}}}}    
\newcommand{\BAE}{\ensuremath{\B_{\mathrm{AE}}}}    
\newcommand{\GBE}{\ensuremath{\Gamma_{\mathrm{BE}}}}    
\newcommand{\CBE}{\ensuremath{\C_{\mathrm{BE}}}}    
\newcommand{\BBE}{\ensuremath{\B_{\mathrm{BE}}}}    
\newtheorem{prop}{Proposition}[section]
\newtheorem{cor}[prop]{Corollary}
\newtheorem{theorem}[prop]{Theorem}
\newtheorem{lemma}[prop]{Lemma}
\newtheorem{conj}[prop]{Conjecture}
\newtheorem{fact}[prop]{Fact}
\theoremstyle{definition}
\newtheorem{defn}[prop]{Definition}
\newtheorem{remark}[prop]{Remark}
\title{Blurred complex exponentiation}
\author{Jonathan Kirby}
\date{23 October 2019} 
\address{Jonathan Kirby, School of Mathematics, University of East Anglia, Norwich Research Park, Norwich NR4 7TJ, UK}
\email{jonathan.kirby@uea.ac.uk}
\keywords{Complex exponentiation, Quasiminimal, Ax-Schanuel, Zilber conjecture}
\subjclass[2010]{Primary: 03C65; Secondary: 03C48}
\begin{document}

\begin{abstract}
It is shown that the complex field equipped with the \emph{approximate exponential map}, defined up to ambiguity from a small group, is quasiminimal: every automorphism-invariant subset of $\C$ is countable or co-countable. If the ambiguity is taken to be from a subfield analogous to a field of constants then the resulting \emph{blurred exponential field} is isomorphic to the result of an equivalent blurring of Zilber's exponential field, and to a suitable reduct of a differentially closed field. These results are progress towards Zilber's conjecture that the complex exponential field itself is quasiminimal. A key ingredient in the proofs is to prove the analogue of the exponential-algebraic closedness property using the density of the group governing the ambiguity with respect to the complex topology.

\end{abstract}

\maketitle

\section{Introduction}

This paper makes progress towards Zilber's quasiminimality conjectures for the complex exponential field.

\begin{conj}[Zilber's weak quasiminimality conjecture, \cite{Zilber97}]
The complex exponential field $\Cexp = \tuple{\C;+,\cdot,\exp}$ is quasiminimal.
\end{conj}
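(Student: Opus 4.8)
The plan is to prove the conjecture by identifying $\Cexp$ with Zilber's exponential field $\Bexp$. Zilber axiomatised in the logic $\Looq$ a class of \emph{pseudo-exponential fields}: structures $\tuple{F;+,\cdot,E}$ such that (E1) $F$ is an algebraically closed field of characteristic $0$; (E2) $E\colon(F,+)\to(F^\times,\cdot)$ is a surjective homomorphism whose kernel is an infinite cyclic group; (E3) the \emph{Schanuel property} holds, i.e.\ $\td_{\Q}(\bar z, E(\bar z))\ge n$ for every $\Q$-linearly independent tuple $\bar z = (z_1,\dots,z_n)$; (E4) the field is \emph{strongly exponentially-algebraically closed}, so that every rotund, free variety $V\subseteq F^n\times(F^\times)^n$ meets the graph of $E$, and does so generically in $V$; and (E5) the \emph{countable closure property}, that the exponential-algebraic closure $\ecl(A)$ of a finite set $A$ is countable. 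The theory of quasiminimal excellent classes shows that this $\Looq$-class has a unique model in each uncountable cardinality and that every model is quasiminimal; in particular $\Bexp$, the model of cardinality $2^{\aleph_0}$, is quasiminimal. It therefore suffices to verify (E1)--(E5) for $\Cexp$: then $\Cexp\iso\Bexp$, and so $\Cexp$ is quasiminimal.

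For $\Cexp$, axioms (E1) and (E2) are immediate, the kernel of $\exp$ being $2\pi i\Z$. Axiom (E5) is also known: the countability of $\ecl(A)$ for finite $A$ follows from the o-minimality of $\Ranexp$ together with Khovanskii-type finiteness, which bounds the number of solutions of the relevant exponential-polynomial systems, so that only countably many points lie over a finite set. The remaining two axioms are the substance. Axiom (E3) for $\Cexp$ is \emph{exactly Schanuel's conjecture} of transcendental number theory; the only unconditional input here is Ax's theorem, the differential (function-field) analogue, which does not yield the arithmetic statement. Axiom (E4), exponential-algebraic closedness, asks that suitably rotund and free subvarieties $V\subseteq\C^n\times(\C^\times)^n$ intersect the graph of complex exponentiation, and do so outside any proper subvariety defined over the same data; this is known only in restricted cases --- for instance when $V$ is the graph of a polynomial map, or splits as a product of an additive and a multiplicative part --- via Brownawell--Masser estimates and Nevanlinna theory, and remains open in general.

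The main obstacle is thus twofold, and each part is a well-known hard problem: Schanuel's conjecture on the number-theoretic side, and full (strong) exponential-algebraic closedness of $\C$ on the complex-analytic side. Short of resolving either, one cannot complete the identification $\Cexp\iso\Bexp$ by this route. The strategy pursued in this paper is to replace $\exp$ by the \emph{blurred} (approximate) exponential map, defined modulo a small group: then Ax's theorem suffices in place of the Schanuel property, and the blurred analogue of exponential-algebraic closedness can be obtained from topological density of that group rather than from value-distribution theory. This yields quasiminimality of the blurred exponential field, together with isomorphisms to correspondingly blurred reducts of $\Bexp$ and of a differentially closed field --- partial progress toward the conjecture, whose full resolution must await advances on (E3) and (E4).
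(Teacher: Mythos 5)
The statement you were asked about is a conjecture: the paper neither proves it nor claims to, and your write-up correctly refrains from claiming a proof. Your analysis matches the paper's own framing exactly --- the route via Zilber's axioms (E1)--(E5) founders on the two open problems you name, Schanuel's conjecture and strong exponential-algebraic closedness of $\C$, while the unconditional content of the paper is the quasiminimality of the blurred/approximate exponential fields, obtained by substituting Ax's theorem for the Schanuel property and topological density of the blurring group for the missing analytic input to $\Gamma$-closedness. The only quibble is peripheral: the countable closure property for $\Cexp$ is simply quoted from Zilber (Lemma~5.12 of the cited paper), and the paper notes that the Ax-Schanuel theorem is what underlies it; your attribution to o-minimality and Khovanskii-type finiteness is the standard folklore account but is not needed here beyond citing the known fact. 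In short, there is no proof to compare because none exists; your assessment of why, and of what the paper actually establishes, is accurate.
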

Zilber defined a structure to be quasiminimal if every definable subset (in one free variable) is countable or co-countable. Since logics stronger than first-order logic are used in his work, there is an ambiguity in what definable should mean. In this paper the strongest reasonable version of the definition is used.
\begin{defn}
A structure $M$ is \emph{quasiminimal} if for every countable subset $A \subs M$ (of ``parameters'') if $S \subs M$ is invariant under $\Aut(M/A)$ then $S$ is countable or its complement is countable.
\end{defn}
Zilber constructed a quasiminimal exponential field $\Bexp$ which gives rise to a stronger form of the quasiminimality conjecture \cite{Zilber00fwpe}, \cite{Zilber05peACF0}.
\begin{conj}[Zilber's strong quasiminimality conjecture]
The exponential fields $\Cexp$ and $\Bexp$ are isomorphic.
\end{conj}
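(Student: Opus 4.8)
This is Zilber's strong quasiminimality conjecture, which remains open; what follows is the program one would pursue, and the point at which it currently breaks down. The natural route is via Zilber's categoricity theorem: $\Bexp$ is, up to isomorphism, the \emph{unique} pseudo-exponential field of cardinality $2^{\aleph_0}$, that is, the unique model of cardinality continuum of Zilber's axioms --- being an algebraically closed \emph{ELA-field} of characteristic zero (so $\exp\colon(\C;+)\to(\C^\times;\cdot)$ is a surjective homomorphism), having \emph{standard kernel} ($\ker\exp$ infinite cyclic, generated by a transcendental), the \emph{Schanuel property}, \emph{strong exponential-algebraic closedness}, and the \emph{countable closure property}. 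Since $\card{\C}=2^{\aleph_0}$, it then suffices to verify these axioms for $\Cexp$.

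I would first dispatch the soft axioms. That $\C$ is algebraically closed of characteristic zero with $\exp$ a surjective homomorphism onto $\C^\times$ is classical, and the Lindemann--Weierstrass theorem gives $\ker\exp = 2\pi i\,\Z$ with $2\pi i$ transcendental, so the standard kernel axiom holds. The countable closure property --- that the exponential-algebraic closure of a finite subset of $\C$ is countable --- follows from the o-minimality of $\Rexp$ and the resulting finiteness of the set of nonsingular solutions of a zero-dimensional system of exponential-polynomial equations, iterated countably many times from a finite set.

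The content, and the obstacles, lie in the last two axioms. The Schanuel property for $\Cexp$ --- that $\td(z_1,\dots,z_n,e^{z_1},\dots,e^{z_n})\ge n$ whenever $z_1,\dots,z_n$ are linearly independent over $\Q$ --- is literally Schanuel's conjecture, a number-theoretic input untouched by the present methods, which would have to be assumed. Strong exponential-algebraic closedness asserts that every rotund, free algebraic subvariety of $\C^n\times(\C^\times)^n$ of dimension $n$ meets the graph of $\exp$, and does so generically over any finitely generated field of definition. This is exactly where the techniques of this paper enter: the main theorem proves the analogue of exponential-algebraic closedness for the \emph{blurred} exponential, using the density of the blurring group in $\C$ with respect to the Euclidean topology to produce solutions by a continuity/approximation argument. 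Completing the conjecture would require upgrading this to the honest complex exponential --- removing the blur --- which at present is known only for special classes of varieties.

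Thus, modulo Schanuel's conjecture, the strong quasiminimality conjecture reduces by Zilber's categoricity theorem to strong exponential-algebraic closedness for $\Cexp$. The present paper establishes the blurred versions of this property and of quasiminimality; the two remaining obstructions are Schanuel's conjecture itself and exponential-algebraic closedness for the unblurred complex exponential field, the latter being precisely the point where the density argument used here does not transfer directly.
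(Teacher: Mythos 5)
This statement is a conjecture, not a theorem of the paper: the paper offers no proof of it, only partial progress via the blurred structures, so there is nothing to compare your argument against. Your proposal correctly treats it as open and accurately reproduces the standard reduction that the paper itself sketches in the introduction: by Zilber's categoricity theorem it suffices to verify for $\Cexp$ the axioms characterising $\Bexp$, the soft axioms and the countable closure property are known, and the genuine obstructions are Schanuel's conjecture and strong exponential-algebraic closedness. One refinement worth adding, which the paper points out in Section~\ref{G-closed section}: if the Zilber--Pink conjecture for tori holds, then strong $\Gamma$-closedness is equivalent (given the other axioms) to plain $\Gamma$-closedness, so the second obstruction could in principle be weakened to the non-``strong'' existential closedness statement --- which is exactly the form of statement this paper attacks, successfully, for the blurred and approximate exponentials via the density argument of Section~\ref{density section}.
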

Since $\Bexp$ is only defined up to isomorphism, the strong conjecture is more accurately stated in the form that $\Cexp$ satisfies the axioms that specify $\Bexp$ up to isomorphism. The key axioms take the form of Schanuel's conjecture of transcendental number theory, and a complementary property called \emph{strong exponential-algebraic closedness}.

The paper \cite{TEDESV} studied the solution set of the exponential differential equation, and showed it could be axiomatized in a very similar way to $\Bexp$. For a differential field $\Fdiff = \tuple{F;+,\cdot,D}$, let $\FEDE$ be the \emph{exponential differential equation} reduct $\FEDE = \tuple{F;+,\cdot,\GDE}$ where $\GDE \subs F^2$ is defined by
\[Dy = y Dx \,\wedge\, y\neq 0.\]

Both the graph $\G$ of the exponential function and the solution set $\GDE$ are subgroups of the algebraic group $G = \ga \cross \gm$, where $\ga$ is the additive group of the field and $\gm$ is the multiplicative group. If $F$ is a differential field of complex functions where exponentiation makes sense, then $(x,y) \in \GDE(F)$ if and only if $(\exists c \in \C)[y = e^{x+c}]$, so $\GDE$ is the graph $\G$ of the exponential function up to a blurring by the field of constants.

The main idea of this paper is that we can similarly blur the exponentiation in $\Cexp$ and $\Bexp$ with respect to suitable subfields or even subgroups, and then prove quasiminimality results about them. We consider both a coarse blurring and a finer blurring. For the coarse blurring we define $\GBE$ (blurred exponentiation) by $(x,y) \in \GBE$ if and only if $(\exists c \in C)[y = e^{x+c}]$, where $C$ is a countable subfield of $\C$ or of $\B$ which can be treated analogously to a field of constants. Specifically, $C$ is $\ecl(\emptyset)$, the subfield of exponentially algebraic numbers. We can prove an analogue of the strong quasiminimality conjecture.
\begin{theorem}\label{blurred exp theorem}\
\begin{enumerate}
\item The blurred exponential fields $\CBE = \tuple{\C;+,\cdot,\GBE}$ and $\BBE = \tuple{\B;+,\cdot,\GBE}$ are isomorphic, and quasiminimal. 
\item For an appropriate differentially closed field $\Fdiff$, the reduct $\FEDE$ is isomorphic to them. 
\item The common first-order theory of these structures is $\aleph_0$-stable. 
\item An isomorphic copy of these structures can be constructed by the amalgamation-with-predimension and excellence techniques.
\end{enumerate}
\end{theorem}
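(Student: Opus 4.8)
The plan is to introduce an abstract class $\mathcal{K}$ of \emph{blurred exponential fields}, axiomatised in $L_{\omega_1,\omega}(Q)$, which contains $\CBE$, $\BBE$ and the relevant reduct $\FEDE$, to show that $\mathcal{K}$ is a quasiminimal pregeometry class in the sense of the general theory of quasiminimal excellence, and then to read off parts (1)--(4) from the categoricity of such classes in uncountable cardinals. The axioms of $\mathcal{K}$ should say, for a structure $\tuple{F;+,\cdot,\GBE}$: that $F$ is an algebraically closed field of characteristic $0$; that $\GBE(F)$ is a subgroup of $G(F)=\ga(F)\times\gm(F)$ whose projection to $\ga$ is onto and whose fibre above $0$ is $C^{\times}$ for a countable algebraically closed subfield $C$ that in the resulting structure coincides with $\ecl(\emptyset)$; a \emph{blurred Schanuel inequality}; a \emph{blurred exponential-algebraic closedness} property, in a weak and a strong form; and the countable closure property for the associated closure operator. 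Taking the blurring field to be $\ecl(\emptyset)$ is what makes the blurred closure operator $x\mapsto\ecl(x)$ a pregeometry with the countable closure property, and this operator controls $\Aut(F)$; granting the two substantive axioms, the standard quasiminimal-excellence machinery then yields that $\mathcal{K}$ is categorical in every uncountable cardinal and that every member of $\mathcal{K}$ is quasiminimal. Since $\card{\C}=\card{\B}=2^{\aleph_0}$ and the differentially closed field used below also has cardinality $2^{\aleph_0}$, parts (1) and (2) reduce to showing that each of the three structures belongs to $\mathcal{K}$.

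That membership check is where the work lies, and for $\CBE$ it splits into the two substantive axioms. The blurred Schanuel inequality for $\CBE$ --- roughly, $\td(\bar a,\bar b/C)\ge\ldim_{\Q}(\bar a/C)$ whenever $(\bar a,\bar b)\in\GBE^{n}$ with $\bar a$ linearly independent over $\Q$ modulo $C$ --- is a restatement of Ax's theorem (the differential, or ``constants'', form of Schanuel's conjecture): blurring the graph of $\exp$ by the countable field $C$ is exactly what transfers Ax's unconditional inequality to the complex setting. The genuinely hard step, flagged in the abstract and the expected main obstacle, is \emph{blurred exponential-algebraic closedness}: given a rotund, free irreducible subvariety $V\subseteq G^{n}$ defined over a finitely generated subfield, one must find $\bar a\in\C^{n}$ and $\bar c\in C^{n}$ with $\bigl(\bar a+\bar c,\exp(\bar a)\exp(\bar c)\bigr)\in V$, and for the strong form with $\bar a$ of full transcendence degree over the parameters. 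Here one uses that $C=\ecl(\emptyset)$ is dense in $\C$ and that $\exp(C)=C^{\times}$ is dense in $\C^{\times}$ for the complex topology, together with the fact that $\exp$ is a surjective open holomorphic covering of $\C^{\times}$: a degree-theoretic or argument-principle computation shows that a system which can be solved ``approximately'' can be solved exactly, while density provides the freedom to translate $V$ by an element of $C$ into the general position needed to run that computation. This is precisely where blurring converts an otherwise conjectural statement into a theorem.

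For part (2), choose an appropriate differentially closed field $\Fdiff$ of cardinality $2^{\aleph_0}$, with its constant field arranged so that the blurred closure operator of $\FEDE$ recovers $\ecl(\emptyset)$, and check that $\FEDE=\tuple{F;+,\cdot,\GDE}$ is a model of $\mathcal{K}$: the blurred Schanuel inequality for $\GDE$ is again Ax's theorem (this is essentially the content of \cite{TEDESV}), blurred exponential-algebraic closedness holds because a differentially closed field is existentially closed as a differential field and the pertinent differential-algebraic systems are consistent (again following \cite{TEDESV}), and the blurred closure operator of $\FEDE$ is the restriction of differential-algebraic closure, which has the countable closure property. Categoricity of $\mathcal{K}$ in cardinality $2^{\aleph_0}$ then gives $\FEDE\cong\CBE\cong\BBE$. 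Part (3) is now formal: by part (2) each of these structures is a reduct of a model of $\DCF_0$, and $\DCF_0$ is $\aleph_0$-stable (indeed totally transcendental); since a reduct of an $\aleph_0$-stable theory in a countable language is again $\aleph_0$-stable, the common first-order theory of $\CBE$, $\BBE$, $\FEDE$ is $\aleph_0$-stable.

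Finally, for part (4) one runs the Hrushovski amalgamation-with-predimension construction on the class of finitely generated partial blurred exponential fields, using as predimension the blurred Schanuel functional
\[\delta(X)=\td(\bar x,\bar b/C)-\ldim_{\Q}(\bar x/C),\]
where $\bar x$ is a $\Q$-basis of $X$ modulo $C$ and $(x_i,b_i)\in\GBE$. Submodularity of $\td$ and modularity of $\ldim_{\Q}$ make $\delta$ a genuine predimension, the blurred Schanuel inequality says $\delta\ge 0$, and this yields the notion of strong embedding $\strong$; the Schanuel inequality together with free amalgamation of fields gives the (asymmetric) amalgamation property over $\strong$-embeddings. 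One then constructs the ``rich'' model and verifies Zilber's excellence axioms --- amalgamation of $\strong$-independent systems of models --- which for this blurred class are much easier than in Zilber's original setting because the multiplicative divisibility phenomena handled by the thumbtack lemma are absorbed into the blurring group $C^{\times}$. The resulting excellent class has a model in every uncountable cardinality; its model of cardinality $2^{\aleph_0}$ lies in $\mathcal{K}$, hence by categoricity is isomorphic to $\CBE$. Throughout, the one step that is neither soft model theory nor a transcription of the differential case from \cite{TEDESV} is the blurred exponential-algebraic closedness of $\CBE$ from the second paragraph, which is where the density of $\ecl(\emptyset)$ in the complex topology does the essential work.
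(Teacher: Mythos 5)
Your overall architecture --- axiomatize a quasiminimal pregeometry class containing all three structures, verify membership, and read off (1)--(4) from uncountable categoricity --- is the paper's architecture, and your verification of the weak closedness axiom for $\CBE$ by density of $\gm(C)$ in the complex topology is the paper's key new ingredient. But there is one genuine gap, and it is precisely the gap that held this theorem up from 2005 until \cite{PEM}. Your class $\mathcal K$ includes a \emph{strong} form of blurred exponential-algebraic closedness (points of $\Gamma^n\cap V$ of full transcendence degree over, or $\Q$-linearly independent from, given parameters), and you propose to verify it for $\CBE$ by the same density argument. That argument cannot deliver it: it produces a single point of $\Gamma^n\cap V$ lying in one small neighbourhood $U$ of one chosen regular point of $V$, with no control over its transcendence degree or linear independence. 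The ``bad'' points of $V$ form a countable union of proper subvarieties, whose images under the local homeomorphism $\theta(x,y)=y/\exp(x)$ leave a comeager subset of the open set $\theta(U)$ --- but a comeager set need not meet the countable dense group $\gm(C)^n$, so no refinement of the density argument produces a generic point. The paper's resolution is structurally different: its axiomatization $\TEDE^*$ contains only the weak, first-order $\Gamma$-closedness, and the passage from weak to generic strong $\Gamma$-closedness over a relatively closed $C$ is an abstract model-theoretic equivalence imported from \cite{PEM} (Fact~\ref{G-closed equivalences}). Without that equivalence, or some substitute for it, your membership check for $\CBE$ (and for the prime model $\FEDE$ in part (2)) does not go through, and hence neither does the isomorphism with $\BBE$.

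Two smaller points. First, even for the weak form your sketch omits the step that makes density sufficient: one must show that $\theta$ restricted to $V$ is, near a suitably generic regular point, a local homeomorphism onto an \emph{open} subset of $\gm^n(\C)$, and this is exactly where rotundity, the fibre dimension theorem and Ax's differential theorem combine to exclude positive-dimensional fibres $V\cap(a+\G^n)$; ``degree theory plus translating $V$ into general position'' does not identify this mechanism. Second, the blurred Schanuel inequality for $\CBE$ is not a transfer of Ax's inequality to $\C$: it is immediate from the fact that $C=\ecl(\emptyset)$ is by definition a closed set of the predimension pregeometry (Lemma~\ref{blurred Ax-Schanuel}). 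Ax's theorem enters the complex side of the proof elsewhere, namely through the countable closure property, i.e.\ in knowing that $\ecl$ of a finite set --- in particular $C$ itself --- is countable.
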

The blurring process preserves the exponential algebraic closure pregeometry, $\ecl$. Thus we can deduce:
\begin{theorem}\label{pregeom theorem}
The fields with pregeometries $\tuple{\C;+,\cdot,\ecl^\C}$ and $\tuple{\B;+,\cdot,\ecl^\B}$ are isomorphic.
\end{theorem}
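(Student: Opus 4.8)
The plan is to obtain Theorem~\ref{pregeom theorem} as a corollary of Theorem~\ref{blurred exp theorem}(1). Fix an isomorphism $\sigma\colon\CBE\to\BBE$ of $\{+,\cdot,\GBE\}$-structures provided by that theorem. It suffices to show that in any exponential field $F$ the operator $\ecl^F$ is recoverable, in an automorphism-invariant way, from the reduct $\langle F;+,\cdot,\GBE\rangle$ alone --- this is precisely the claim, announced in the introduction, that blurring preserves the $\ecl$-pregeometry. Once we have it for $F=\Cexp$ and $F=\Bexp$, the isomorphism $\sigma$ automatically carries $\ecl^\C$ to $\ecl^\B$, which is what Theorem~\ref{pregeom theorem} asserts.

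To make this precise I would work with the exponential-algebraic-closure operator $\ecl_{\GBE}^F$ attached to a blurred field $\langle F;+,\cdot,\GBE\rangle$: defined by the usual predimension construction, but with $\GBE$ in place of the graph $\G$ and with transcendence degrees taken over the parameter field $C$. Note that both $C$ and $e^C$ are $\emptyset$-definable countable subsets of the reduct, namely $C=\class{x}{(x,1)\in\GBE}$ and $e^C=\class{y}{(0,y)\in\GBE}$, so $\ecl_{\GBE}$ and these two sets are transported by any isomorphism of $\{+,\cdot,\GBE\}$-structures. The core lemma is then that blurring changes nothing:
\[
  \ecl_{\GBE}^F(A)\;=\;\ecl^F(A)\qquad\text{for every }A\subs F.
\]
Its content is the identity $\ecl_{\GBE}^F(A)=\ecl^F(A\cup C)$, which follows by comparing the predimension functions of the two structures: any $\GBE$-partner of a tuple differs multiplicatively from its true exponential by an element of $e^C\subs C$ (because $C=\ecl(\emptyset)$ is closed under $\exp$), so the $\GBE$-predimension over $C$ and the $\G$-predimension over $A\cup C$ agree. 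Since $C=\ecl(\emptyset)\subs\ecl^F(A)$, adjoining $C$ to the base is harmless and $\ecl^F(A\cup C)=\ecl^F(A)$.

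Granting the lemma the deduction is immediate. As an isomorphism of $\{+,\cdot,\GBE\}$-structures, $\sigma$ carries the $\emptyset$-definable sets $C$ and $e^C$ on the $\C$-side to their counterparts on the $\B$-side, and hence carries the operator $\ecl_{\GBE}^{\C}=\ecl^{\C}$ to $\ecl_{\GBE}^{\B}=\ecl^{\B}$; so $\sigma$ is an isomorphism $\langle\C;+,\cdot,\ecl^\C\rangle\to\langle\B;+,\cdot,\ecl^\B\rangle$.

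The whole weight of the argument sits in Theorem~\ref{blurred exp theorem}(1), whose proof combines the blurred exponential-algebraic-closedness property (obtained from density of the ambiguity group with respect to the complex topology) with an excellence/quasiminimality argument; I expect that, rather than the present deduction, to be the main obstacle. The only genuine point in the deduction itself is the identification $\ecl_{\GBE}=\ecl$, and there the essential feature is that the blurring field is taken to be exactly $C=\ecl(\emptyset)$: this is simultaneously what makes $C$ and $e^C$ $\emptyset$-definable in the blurred language --- so that the closure operator is isomorphism-invariant --- and what makes them pregeometrically inert, since $C\subs\ecl^F(A)$ for every $A$ --- so that passing to the blur is invisible to $\ecl$. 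For an arbitrary countable subfield the two pregeometries would in general differ.
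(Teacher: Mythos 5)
Your proposal is correct and follows essentially the same route as the paper: the deduction from Theorem~\ref{blurred exp theorem}(1) via the identification of the pregeometry of the blurred structure with $\ecl^F$ is exactly how the paper argues, and your ``core lemma'' (including the predimension comparison and the observation that $C=\ecl(\emptyset)$ makes adjoining $C$ to the base harmless) is precisely the paper's Proposition~\ref{blurred pregeom}, proved the same way.
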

This theorem can be viewed as establishing the ``geometric part'' of the strong quasiminimality conjecture.

\medskip

We also consider a relatively fine blurring of \Cexp, which we call \emph{approximate exponentiation}, although the approximation is up to a group of \Q-rank 2, not a metric approximation. The group $\GAE$ is given by $(x,y )\in \GAE$ if and only if $(\exists c \in \Q + 2\pi i \Q) [y= e^{x+c}]$. In this setting we prove the analogue of the weak quasiminimality conjecture.
\begin{theorem}\label{CAE is qm}
The approximate exponential field $\CAE \leteq \tuple{\C;+,\cdot,\GAE}$ is quasiminimal.
\end{theorem}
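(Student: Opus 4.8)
The plan is to exhibit $\CAE$, equipped with the exponential-algebraic closure operator $\ecl$, as a quasiminimal pregeometry structure; quasiminimality then follows from the general theory of such structures. Since $\Q + 2\pi i\Q \subseteq \ecl(\emptyset)$ --- the rationals lie in $\acl(\emptyset)$ and $2\pi i$ is exponentially algebraic over $\emptyset$, being a nonzero element of $\ker\exp$ --- the operator $\ecl$ as computed inside $\CAE$ coincides with the one computed in $\Cexp$, so it is a pregeometry (exchange being a consequence of the Ax--Schanuel theorem) of finite character with the countable closure property, and it is respected by $\Aut(\CAE)$. These routine axioms being in hand, the whole content lies in the homogeneity axioms: uniqueness of the ``generic'' quantifier-free type over an $\ecl$-closed set, and $\aleph_0$-homogeneity over $\ecl$-closed sets. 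Concretely, it suffices to prove that for every countable $A \subseteq \C$ the group $\Aut(\CAE / \ecl(A))$ acts transitively on $\C \setminus \ecl(A)$: then any $\Aut(\CAE/A)$-invariant set is either disjoint from $\C \setminus \ecl(A)$, hence contained in the countable set $\ecl(A)$, or contains all of $\C \setminus \ecl(A)$, hence is co-countable.

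Given $a,b \in \C\setminus\ecl(A)$ I would build an automorphism sending $a$ to $b$ by a back-and-forth between countable $\ecl$-closed substructures of $\CAE$, fixing $\ecl(A)$, with each partial map preserving $+$, $\cdot$ and the relation $\GAE$. The extension step is: given a partial isomorphism $\sigma\colon A' \to B'$ between such substructures and a point $c \in \C$, extend $\sigma$ to $\ecl(A'c)$. If $c\notin\ecl(A')$ then Ax--Schanuel forces $\td(c,e^{c}/A') = 2$, so $c$ is generic over $A'$, and it may be matched with any $c^{\ast}$ transcendental over $B'$ with $e^{c^{\ast}}$ transcendental over $B'(c^{\ast})$ (such $c^{\ast}$ exists because $B'$ is countable and the bad set is a countable union of analytic curves); the induced finitely generated field isomorphism is then checked to respect $\GAE$, there being no spurious blurred relations at a generic point. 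The real work is closing this up under $\ecl$: each exponentially algebraic element to be adjoined lies, together with its exponential, on a subvariety $V \subseteq G^{m}$ with $\dim V = m$ that is free and rotund and defined over the field built so far, and on the $B$-side we must produce a point of $V \cap \GAE^{m}$ that is moreover generic in $V$ over that field, so that the loci match and the partial isomorphism extends.

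Everything therefore reduces to a \emph{blurred exponential-algebraic closedness} statement: every free, rotund $V \subseteq G^{m} = (\ga\cross\gm)^{m}$ with $\dim V = m$, defined over a countable field, contains a point of $\GAE^{m}$ that is generic in $V$ over any prescribed countable subfield. This is where the density of $\Q + 2\pi i\Q$ in $\C$ (equivalently of $e^{\Q+2\pi i\Q}$ in $\C^{\ast}$) enters. Writing $\GAE^{m} = \bigcup_{\bar c}L_{\bar c}$ with $L_{\bar c} = \{(x_{i},e^{x_{i}+c_{i}})_{i} : \bar x\in\C^{m}\}$ the translate of the graph of $\exp^{m}$ indexed by $\bar c\in(\Q+2\pi i\Q)^{m}$, I would consider the holomorphic map $\Psi\colon \C^{m}\times\C^{m}\to G^{m}(\C)$, $(\bar x,\bar c)\mapsto(x_{i},e^{x_{i}+c_{i}})_{i}$. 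A Jacobian computation shows $\Psi$ is a submersion, so $\Psi^{-1}(V)$ is a complex-analytic set of pure dimension $m$; the Ax--Lindemann part of Ax--Schanuel, together with freeness of $V$, shows that $V\cap L_{\bar c}$ is finite for all $\bar c$ outside a thin set, so the projection $\Psi^{-1}(V)\to\C^{m}$ onto the $\bar c$-coordinates is a generically finite holomorphic map between complex spaces of equal dimension and hence open at generic points; its image therefore contains a non-empty Euclidean-open set $U$. For $\bar c\in U$ we get $V\cap L_{\bar c}\neq\emptyset$, and as $\Q+2\pi i\Q$ is dense we may take $\bar c \in U\cap(\Q+2\pi i\Q)^{m}$, producing a point of $V\cap\GAE^{m}$.

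I expect the genericity clause to be the main obstacle. We need the point of $V\cap\GAE^{m}$ to be generic in $V$ over a prescribed countable field $F'$, which means avoiding the projections to $\C^{m}$ of the proper complex-analytic sets $\Psi^{-1}(V\cap S)$ as $S$ ranges over proper subvarieties of $V$ over $F'$ --- a countable family of closed, nowhere dense, measure-zero subsets of $\C^{m}$ --- while still landing in the \emph{countable} dense set $(\Q+2\pi i\Q)^{m}$, and density alone does not place a point of a countable dense set inside a merely comeager set. Overcoming this requires a sharper analysis of the family $\{V\cap L_{\bar c}\}_{\bar c\in U}$: one works with the full half-dimensional real-analytic picture and exploits the density of the ambiguity group in both modulus and argument (the two independent generators $1$ and $2\pi i$) to show the exceptional loci do not exhaust the available $\bar c$. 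Once the blurred closedness statement is established with genericity, the back-and-forth closes up, the generic type over each $\ecl$-closed set is unique and realised by an automorphism, and Theorem~\ref{CAE is qm} follows.
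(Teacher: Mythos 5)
Your overall strategy is the paper's: reduce to the countable closure property plus an exponential-algebraic-closedness statement for $\GAE$ proved via density of $\Q+2\pi i\Q$, and then invoke the quasiminimal-pregeometry machinery. Your density argument for the \emph{non-emptiness} of $V\cap\GAE^{m}$ is essentially Proposition~\ref{density implies G-closed} in different packaging (the paper works with $\theta(x,y)=y/\exp(x)$ restricted to a chart of $V$ rather than your map $\Psi$; in either formulation the point is that the relevant fibres are generically finite, so the map is open and its image meets the dense group). One local correction: the generic finiteness of $V\cap L_{\bar c}$ is \emph{not} a consequence of freeness plus Ax--Lindemann; it is exactly where rotundity enters, via the fibre dimension theorem applied to the maps $V\to M\cdot V$ (Lemma~\ref{rotund fibre lemma}) combined with Ax's theorem. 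A free non-rotund $V$ can meet a single translate of the graph of $\exp^{m}$ in a positive-dimensional set.

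The genuine gap is the one you diagnose yourself: your back-and-forth needs a point of $V\cap\GAE^{m}$ that is \emph{generic} in $V$ over a prescribed countable field, and the density argument cannot deliver this, because the exceptional set of parameters $\bar c$ is only a countable union of proper analytic subsets of $\C^{m}$, and a countable dense subgroup such as $(\Q+2\pi i\Q)^{m}$ can perfectly well be contained in such a union (it is itself meager and null). No ``sharper analysis in modulus and argument'' is known to repair this directly; this is precisely the gap described in the introduction that stalled the result for a decade. The paper's resolution is to not ask the density argument for genericity at all: by Fact~\ref{G-closed equivalences} (Bays--Kirby, \cite[Prop.~11.5]{PEM}), plain $\Gamma$-closedness of a full $\Gamma$-field already implies generic strong $\Gamma$-closedness over any relatively $\Gamma$-closed subfield --- roughly, one obtains a generic point of $V$ by applying mere non-emptiness to a suitably chosen auxiliary (strongly rotund) variety in more variables --- and then Fact~\ref{G-closed + CCP implies qm} gives quasiminimality from $\Gamma$-closedness plus CCP. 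So your proof becomes correct once you replace the missing genericity step by an appeal to these two facts; without them (or an equivalent substitute such as Zilber--Pink for tori) the argument does not close.
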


The main new idea in the proofs of the theorems is to use the density of the subgroup $\Q + 2\pi i \Q$ (or $\ga(C)$) in $\C$ with respect to the complex topology to prove that appropriate algebraic subvarieties of $G^n$ meet $\GAE^n$ (or $\GBE^n$). This is enough to prove the $\Gamma$-closedness property, which is the equivalent of the exponential-algebraic closedness property satisfied by \Bexp. The other ideas are taken or adapted from \cite{TEDESV} and from \cite{PEM}.

This paper has had a long gestation. Theorems~\ref{blurred exp theorem} and~\ref{pregeom theorem} were originally conceived during the Model Theory semester at the Isaac Newton Institute, Cambridge, in 2005. However there was a gap in the argument as although I could show that $\CBE$ was $\Gamma$-closed, I needed to show it was \emph{strongly $\Gamma$-closed} in order to get the isomorphism with $\BBE$. This gap was finally filled in joint work with Martin Bays for the paper \cite{PEM} where we proved that $\Gamma$-closedness implies generic strong $\Gamma$-closedness, which in the setting of $\CBE$ is equivalent to strong $\Gamma$-closedness. We also showed that generic strong $\Gamma$-closedness (together with the countable closure property) implies quasiminimality, and this fact is used for Theorem~\ref{CAE is qm}.

\subsection*{Overview}
In Section~\ref{EDE section}, the theory \TEDE\ of the exponential differential equation is recalled from \cite{TEDESV}. Section~\ref{G-field section} introduces $\Gamma$-fields and explains how blurred exponential fields are examples of them, before giving details of the pregeometry on a $\Gamma$-field based on the Ax-Schanuel property. Section~\ref{G-closed section} explains various notions of the $\Gamma$-closedness property, and Section~\ref{QM section} uses the equivalence of some of these notions to prove that a strengthening $\TEDE^*$ of $\TEDE$ is uncountably categorical, and its models are quasiminimal. Section~\ref{density section} contains the proof of the $\Gamma$-closedness property using density of the blurring group in the complex topology, and Section~\ref{proofs section} pulls everything together to give proofs of the main theorems. The final Section~\ref{remarks section} consists of some remarks and questions about the quasiminimality of $\Cexp$ itself and of other related structures.

\subsection*{Acknowledgements}
I would like to thank Boris Zilber for many useful conversations. 
I learned the idea of blurring a model-theoretically wild structure to produce a stable structure
from his paper \cite{Zilber04}.

 I would also like to thank the many seminar audiences in the American Midwest who made helpful comments when I talked about these ideas in 2006/07.

\section{The exponential differential equation}\label{EDE section}

Let $F_\mathrm{diff} = \tuple{F;+,\cdot,D}$ be a differentially closed field of characteristic zero.

\begin{defn}
We define $\GDE \subs F^2$ to be the solution set of the exponential differential equation. That is,
\[\GDE \leteq \class{(x,y) \in \ga(F)\times\gm(F)}{Dy = y Dx\, \wedge\, y \neq 0}.\]
We write $\FEDE$ for the reduct $\tuple{F;+,\cdot,\GDE}$ of $F_{\mathrm{diff}}$.
\end{defn}
Observe that the subfield $C$ of constants of the differential field is defined in $\FEDE$ by $(x,1) \in \GDE$. So we can add $C$ to the language without changing the definable sets. Indeed throughout this paper the language for a structure only matters up to which sets are $\emptyset$-definable. So the notion of \emph{reduct} is always meant in the sense of $\emptyset$-definable sets.

\medskip

We write $G$ for the algebraic group $\ga \cross \gm$. It is easy to see that $\GDE$ is a subgroup of $G(F)$, and hence a $\Z$-submodule.  Since $G$ is a $\Z$-module, $G^n$ is naturally a module over the ring $\Mat_n(\Z)$ of $n\times n$ integer matrices. The axiomatization of $\FEDE$ involves this module structure as we now explain.

The algebraic group $G$ is isomorphic to the tangent bundle $T\gm$ of $\gm$, so we can identify $G^n$ with $T\gm^n$.
The algebraic subgroups of $\gm^n$ are given by conjunctions of equations of the form $\prod_{i=1}^n y_i^{m_i} = 1$, with the $m_i \in \Z$. In matrix form we can abbreviate such conjunctions of equations as $y^M = 1$. If $J$ is the algebraic subgroup of $\gm^n$ given by $y^M = 1$ then we write $TJ$ for the algebraic subgroup of $G^n$ given by $Mx = 0$ and $y^M = 1$, where $x = (x_1,\ldots,x_n)$ are the coordinates on $\ga^n$ and $Mx$ denotes the usual matrix multiplication.

Given $M \in \Mat_n(\Z)$, and a subvariety $V \subs G^n$, we write $M\cdot V$ for the image of $V$ under $M$. The action of $M$ is by regular maps, so $M\cdot V$ is a constructible subset of $G^n$, so has a dimension.
\begin{defn}
An irreducible subvariety $V$ of $G^n$ is \emph{rotund} if for all $M \in \Mat_n(\Z)$ we have $\dim M\cdot V \ge \rk M$, the rank of the matrix.

$V$ is \emph{strongly rotund} if for all such $M$ we have $\dim M\cdot V > \rk M$, except when $M = 0$ when necessarily $\dim M\cdot V = 0$.

\end{defn}
In particular, taking $M$ to be the identity matrix, if $V \subs G^n$ is rotund then $\dim V \ge n$.

\begin{fact}\label{TEDE axioms}
The following properties are first-order expressible and axiomatize the complete first-order theory $\TEDE$ of $\tuple{F;+,
\cdot,\GDE}$. For the axioms we will use the symbol $\Gamma$ instead of $\GDE$.
  \begin{description}
  \item[$\mathbf{ACF_0}$] $\tuple{F;+,\cdot}$ is an algebraically closed
    field of characteristic zero.
  \item[\textup{Group}] $\Gamma$ is a subgroup of $\ga(F)\cross \gm(F)$.
  \item[\textup{Fibres}] The set $\class{x \in F}{(x,1)\in \Gamma}$ is an algebraically closed subfield of $F$, denoted by $C$, and
  $(0, y) \in \Gamma \iff y \in \gm(C)$.
  \item[Ax-Schanuel property] If $a \in \Gamma^n$ satisfies $\td(a/C) < n+1$ then
    there is a proper algebraic subgroup $J$ of $\gm^n$ such that
    $a \in TJ + G^n(C)$. (Here we write the group operation additively.)
  \item[\textup{Full $\Gamma$-closedness}] For each $n \in
    \N$, and each rotund irreducible algebraic subvariety $V$ of
    $G^n$, the intersection $\Gamma^n \cap V$ is
    nonempty.
\item[Non-triviality] $\exists x[x \notin C]$
  \end{description}
\end{fact}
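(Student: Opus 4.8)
The plan is to follow \cite{TEDESV}: establish that the listed properties form a first-order schema, verify that $\FEDE$ satisfies every one of them when $F$ is differentially closed, and then prove completeness of the deductive closure by a back-and-forth argument organised around the Ax--Schanuel predimension.

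\emph{First-order expressibility.} The axioms $\mathbf{ACF_0}$, Group, and Non-triviality are manifestly first-order, and Fibres is a relativisation schema (closure of $C=\{x:(x,1)\in\Gamma\}$ under the field operations, the $\ACF_0$ axioms relativised to $C$, and the biconditional $(0,y)\in\Gamma\iff y\in\gm(C)$). For Ax--Schanuel one uses that the conclusion may be restricted to proper algebraic subgroups $J$ of $\gm^n$ of complexity bounded in terms of $n$ alone --- an effective form of Ax's theorem, or equivalently a compactness/König argument over the countably many $J$ --- so that ``$a\in\Gamma^n$ and $\td(a/C)<n+1$'' (itself first-order, since $\dim$ of a locus is definable in $\ACF_0$) implies a \emph{finite} disjunction $\bigvee_J\,[a\in TJ+G^n(C)]$; this makes the property a first-order schema indexed by $n$. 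Likewise, rotundity of $V$ --- a priori a condition quantifying over all $M\in\Mat_n(\Z)$ --- reduces by a routine argument to checking finitely many $M$ whose entries are bounded in terms of $n$ and $\dim V$ (the map $M\mapsto M\cdot V$ factors through the $\Z$-row-span of $M$, and once $\dim M\cdot V\ge\rk M$ holds for a bounded generating set it holds for all $M$); since for each degree the subvarieties $V\subs G^n$ of that degree form a definable family, ``$V$ rotund $\Rightarrow\Gamma^n\cap V\ne\emptyset$'' is then a first-order schema as well.

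\emph{The axioms hold in $\FEDE$.} For $F$ a differentially closed field of characteristic zero, $\mathbf{ACF_0}$ and Non-triviality are immediate; Group is the one-line computation that $D(y_1y_2)=(y_1y_2)\,D(x_1+x_2)$ and $D(y^{-1})=y^{-1}\,D(-x)$ from $Dy_i=y_iDx_i$; and Fibres holds because $(x,1)\in\GDE\iff Dx=0$, $(0,y)\in\GDE\iff y\in C^\times=\gm(C)$, and the constant field $C$ of a characteristic-zero differentially closed field is algebraically closed. The Ax--Schanuel axiom is precisely Ax's theorem on the differential analogue of Schanuel's conjecture, taken relative to the constant field and strengthened to the group statement (if $\td(a/C)\le n$ the $x$-coordinates of $a$ admit a nontrivial $\Q$-linear dependence modulo $C$, and the passage to a proper $J$ with $a\in TJ+G^n(C)$ is the usual bootstrap). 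The remaining axiom, Full $\Gamma$-closedness, is the substantive existence statement; I would prove it as in \cite{TEDESV}, exploiting that the differentially closed fields are exactly the existentially closed differential fields: it therefore suffices to find a point of $\GDE^n\cap V$ in \emph{some} differential field extension of $F$, and rotundity of $V$ is exactly the dimension hypothesis under which, starting from a generic point of $V$, one can integrate the equations $Dx_i=Dy_i/y_i$ inside a suitable differential field (e.g. a Hahn or Puiseux series field with a derivation, where such ODEs are solvable) without over-determining the system.

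\emph{Completeness.} To see that $\TEDE$ is complete I would run a back-and-forth between any two of its models $F_1,F_2$. Using the Ax--Schanuel axiom, define on finitely generated subgroups $B$ of $\Gamma$ the predimension $\delta(B)=\td(B/C)-\dim_\Q(\bar x\text{-part of }B)$; it is non-negative and submodular, yielding a notion of self-sufficient ($\strong$) substructure and an associated pregeometry. One checks that $\dcl(\emptyset)$ is canonically the same in $F_1$ and $F_2$ (it is generated from $C$-type data), and that Group, Fibres and Full $\Gamma$-closedness together allow any isomorphism between finitely generated self-sufficient substructures to be extended one point at a time --- solving a rotund variety produces the required new element, first in the domain and, symmetrically, in the codomain. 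The back-and-forth then exhibits $F_1\equiv_{L_{\infty,\omega}}F_2$, hence $F_1\equiv F_2$, so $\TEDE$ is complete; applying this with $F_2=\FEDE$ identifies $\TEDE$ as the theory of $\FEDE$.

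\emph{Main obstacle.} The hard part is Full $\Gamma$-closedness: proving that it holds in $\FEDE$ requires the differential-algebraic existence argument above, whose delicate point is that rotundity is \emph{exactly} --- neither more nor less than --- what makes the integration step succeed, and proving its first-order expressibility requires the effective reduction of rotundity to finitely many matrices. The other axioms are either direct computations, an appeal to Ax's theorem, or the now-standard predimension back-and-forth.
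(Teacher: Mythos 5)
The paper does not actually prove this Fact: its ``proof'' is a one-line citation of \cite[Theorem~4.11]{TEDESV} (specialised to $\mathcal S=\class{\gm^n}{n\in\N}$), so what you are reconstructing is the argument of that reference. Your outline does match its architecture: verify each axiom in the reduct of a differentially closed field (Ax's theorem for the Ax--Schanuel axiom; for full $\Gamma$-closedness, extend the derivation to the function field of a rotund $V$ and invoke existential closedness of $\DCF_0$), and obtain completeness from a predimension-driven back-and-forth. You also correctly identify full $\Gamma$-closedness as the substantive step.

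Two of your steps would fail as written. First, a back-and-forth between \emph{arbitrary} models $F_1,F_2$ cannot yield $F_1\equiv_{L_{\infty,\omega}}F_2$: $\TEDE$ has non-isomorphic countable models --- for instance the reduct of the differential closure of $\Q$, whose constant field is $\overline{\Q}$, versus the reduct of the countable saturated model of $\DCF_0$, where $\td(C/\Q)=\aleph_0$ --- and for countable structures $L_{\infty,\omega}$-equivalence is isomorphism. The back-and-forth must be run between $\aleph_0$-saturated models, with saturation supplying the new constants and new generic points needed to extend a partial isomorphism between finitely generated strong substructures; that gives elementary equivalence of the saturated models, which suffices for completeness. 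Second, the finiteness claims underlying first-order expressibility are not ``routine'': checking $\dim M\cdot V\ge\rk M$ on a bounded generating set of matrices does not imply it for all $M$, and the restriction of the Ax--Schanuel conclusion to finitely many subgroups $J$ is likewise not formal. Both reductions rest on the uniform Ax--Schanuel theorem (weak CIT), \cite[Theorem~4.3]{TEDESV} --- the same uniformity the present paper alludes to in the remark following Proposition~\ref{density implies G-closed}. With the saturation repair and with the uniform Ax--Schanuel named as the source of definability of rotundity and of the finite disjunction in the Ax--Schanuel schema, your sketch is a fair account of the cited proof.
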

\begin{proof}
Theorem~4.11 of the paper \cite{TEDESV} gives this theorem in more generality, for the exponential differential equations of a set $\mathcal S$ of semiabelian varieties. The specific case we use is when $\mathcal S = \class{\gm^n}{n \in \N}$.
\end{proof}
\begin{remark}
As a reduct of $\DCF_0$, the theory $\TEDE$ is $\aleph_0$-stable and has Morley rank at most $\omega$. Since it is an expansion of the pair of algebraically closed fields $\tuple{F;+,\cdot,C}$, it has Morley rank exactly $\omega$. This simple observation proves part (3) of Theorem~\ref{blurred exp theorem}, assuming parts (1) and (2).
\end{remark}

\section{$\Gamma$-fields}\label{G-field section}

\begin{defn}
An \emph{exponential field} $\Fexp = \tuple{F;+,\cdot,\exp_F}$ is a field of characteristic zero equipped with a group homomorphism $\exp_F: \ga(F) \to \gm(F)$.
\end{defn}
In this paper we will consider only the complex exponential field $\Cexp$ and Zilber's exponential field $\Bexp$ (see Definition~\ref{B defn}).

To fit exponential fields and the exponential differential equation into one framework, we use the notion of a $\Gamma$-field, which was introduced in \cite{PEM}. We only consider $\Gamma$-fields with respect to the algebraic group $G = \ga \cross \gm$. In \cite{PEM} a more general definition was given.
\begin{defn}
A \emph{$\Gamma$-field} $F_\Gamma = \tuple{F;+,\cdot,\Gamma}$ consists of a field $F$ of characteristic 0, and a divisible subgroup $\Gamma \subs G(F)$. 

A $\Gamma$-field $F_\Gamma$ is \emph{full} if $F$ is algebraically closed,  $\pi_1(\Gamma) = \ga(F)$, and $\pi_2(\Gamma) = \gm(F)$, where $\pi_1:G \onto \ga$ and $\pi_2: G\onto \gm$ are the projection maps.
\end{defn}
An exponential field is a $\Gamma$-field by taking $\Gamma$ to be the graph of the exponential map. It is full if the field is algebraically closed and the exponential map is surjective. The reducts $\FEDE$ of differentially closed fields are full $\Gamma$-fields.

\subsection{Blurring an exponential field}

We can think of the set $\GDE$ of solutions to the exponential differential equation as a blurred version of the graph of the exponential function. Instead of $\class{(x,y)}{y=e^x}$ it is like $\class{(x,y)}{(\exists c \in \gm(C))y=c\cdot e^x}$, where $C$ is the field of constants. If $F$ is an abstract differential field rather than a field of functions this does not make literal sense. However it motivates the main idea of this paper, the notion of blurring the graph of exponentiation by a subgroup.
\begin{defn}
Let $\Fexp$ be an exponential field and let $\h$ be any subgroup of $\gm(F)$.  Let $\Gamma_\h$ be the subgroup
 \[\Gamma_\h = \class{(x,y)\in G(F)}{\frac{y}{\exp(x)} \in \h}\]
 of $G(F)$. Define $F_\h$ to be the $\Gamma$-field $\tuple{F;+,\cdot,\Gamma_\h}$.
\end{defn}

Taking $\h = \{1\}$, so no blurring at all, we get $\Fexp$ back. Taking $\h = \gm(F)$ we get $\Gamma_\h = G(F)$, the whole algebraic group. The blurred exponentiation $\GBE$ is the case where $\h = \gm(C)$ for the countable subfield $C = \ecl^F(\emptyset)$ and the approximate exponentiation $\GAE$ is the case when $\h = \exp(\Q + 2\pi i \Q)$.

\subsection{The pregeometry on a $\Gamma$-field}\label{predim section}

The Ax-Schanuel property for a $\Gamma$-field asserts the non-negativity of a certain predimension function. This leads to a pregeometry in the usual way.

Let $F_\Gamma$ be a full $\Gamma$-field, and let $A$ be a $\Gamma$-subfield of $F$, that is, $A$ is a subfield of $F$ and we define $\Gamma(A) \leteq G(A) \cap \Gamma$. We require that $\Gamma(A)$ is pure as a subgroup of $\Gamma$, that is, if $a \in \Gamma$ and $m \in \N^+$ are such that $ma \in \Gamma(A)$ then $a \in \Gamma(A)$. So in particular $\Gamma(A)$ is divisible and contains any torsion points which $\Gamma$ has.

 Let $b$ be a finite tuple from $\Gamma(F)$. Let $\gen{\Gamma(A),b}$ be the  divisible hull of $\Gamma(A) \cup b$ in $\Gamma$. Then the quotient group $\gen{\Gamma(A),b}/\Gamma(A)$ is a $\Q$-vector space, and we write the dimension of this space as $\ldim_\Q(b/\Gamma(A))$.
\begin{defn}
The \emph{predimension} of $b$ over $A$ is 
\[\delta(b/A) \leteq \td(b/A) - \ldim_\Q(b/\Gamma(A)).\]
\end{defn}

\begin{defn}
A $\Gamma$-subfield $A$ of $F$ is said to be \emph{(relatively) $\Gamma$-closed in $F$} if for every finite tuple $b$ from $\Gamma$ such that $\delta(b/A) \le 0$ we have $b \in \Gamma(A)$.
\end{defn}

\begin{fact}[{\cite[Proposition~4.14 and Lemma~4.10]{PEM}}]
The relatively $\Gamma$-closed subfields of a full $\Gamma$-field $F$ are the closed sets of a pregeometry on $F$. They are all full $\Gamma$-subfields of $F$.
\end{fact}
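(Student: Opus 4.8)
The plan is to run the standard argument that a submodular predimension induces a pregeometry (the version for $\Gamma$-fields being the one worked out in \cite{PEM}), and then to confirm the ``full'' clause by a short direct computation.

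The key algebraic input is the submodularity of $\delta$ over a fixed base: for $\Gamma$-subfields $B_1,B_2$ containing a common $\Gamma$-subfield $A$,
\[\delta(B_1\vee B_2/A)+\delta(B_1\cap B_2/A)\le\delta(B_1/A)+\delta(B_2/A),\]
where $B_1\vee B_2$ is the $\Gamma$-subfield generated by $B_1\cup B_2$. This splits into its two constituents: transcendence degree is additive, hence submodular, and $\ldim_\Q$ is \emph{modular} on the $\Q$-subspaces of $\Gamma/\Gamma(A)$. The second point uses that $\Gamma(A)$ is pure in $\Gamma$, so that $\Gamma/\Gamma(A)$ is torsion-free and divisible, i.e.\ a $\Q$-vector space, together with the identities $\Gamma(B_1\cap B_2)=\Gamma(B_1)\cap\Gamma(B_2)$ and $\Gamma(B_1\vee B_2)\equiv\Gamma(B_1)+\Gamma(B_2)\pmod{\Gamma(A)}$; adding the submodular inequality for $\td$ to the modular equality for $\ldim_\Q$ gives the claim. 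Granting this, and the non-negativity of $\delta$ supplied by the Ax-Schanuel property in force throughout the section (which is exactly what makes $\delta$ a predimension), one obtains in the usual way the hull construction: over a $\Gamma$-subfield $A$ and a finite tuple $b$ from $\Gamma$ there is a smallest $\Gamma$-subfield $\hull{Ab}\supseteq A$, finitely generated over $A$ and containing $b$, realising the minimum value of $\delta(-/A)$; a second use of submodularity makes the hull operation transitive.

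Next, form the dimension function $d(X)\leteq\min\{\delta(B)\mid X\subseteq B\}$; submodularity of $\delta$ yields its additivity, $d(\bar a\bar b/C)=d(\bar a/\bar bC)+d(\bar b/C)$. Define the closure operator by $\ecl(X)\leteq\{y\in F\mid d(y/X)=0\}$; equivalently, $y\in\ecl(X)$ iff $y$ lies in the hull of some finite tuple from $\Gamma$ over the $\Gamma$-subfield generated by a finite subset of $X$. Reflexivity, monotonicity, finite character and idempotence (the last via transitivity of hulls) are routine, as is the fact that $\ecl(X)$ is a $\Gamma$-subfield (it is closed under field operations and $\Gamma(\ecl(X))$ is pure, both because $\ecl(X)$ contains field-theoretic algebraic closures). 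The one substantive pregeometry axiom is exchange, and given additivity of $d$ it is immediate: if $y\in\ecl(Xb)\setminus\ecl(X)$ then $d(y/Xb)=0$ while $d(y/X)=1$ (adjoining a single element raises $d$ by at most $1$), so additivity gives $d(b/Xy)=d(b/X)-1\le0$, i.e.\ $b\in\ecl(Xy)$. The main obstacle in all of this is not conceptual but technical: making ``finitely generated $\Gamma$-subfield'' and the purification procedure precise, checking that the hulls and the minima defining $d$ are genuinely well-defined, and carrying out the submodularity bookkeeping is where the real work lies — traditionally exchange is singled out as the crux, but here it is a formal consequence of that bookkeeping.

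It remains to identify the closed sets and confirm fullness. A $\Gamma$-subfield $A$ is $\ecl$-closed exactly when it is relatively $\Gamma$-closed: if $A=\ecl(A)$ and $\delta(b/A)\le0$ for a finite tuple $b$ from $\Gamma$, then $\hull{Ab}$ has predimension $\le0$ over $A$, so its generators over $A$ lie in $\ecl(A)=A$ and $b\in\Gamma(A)$; conversely a relatively $\Gamma$-closed $A$ has $\delta(c/A)\ge0$ for every finite tuple $c$ from $\Gamma$, with equality only when $c\in\Gamma(A)$, so no nontrivial hull arises over $A$ and $\ecl(A)=A$. As $\ecl$ is a finitary closure operator its closed subfields are closed under intersection and directed union, so the relatively $\Gamma$-closed subfields of $F$ are precisely the closed sets of the pregeometry $\ecl$. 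Finally, let $A$ be relatively $\Gamma$-closed. Whenever $(a,b)\in\Gamma$ has one coordinate algebraic over $A$ (in the field sense), $\td((a,b)/A)$ equals the transcendence degree over $A$ of the other coordinate, hence is $\le1$; then $\ldim_\Q((a,b)/\Gamma(A))$ is $0$ or $1$, and either $(a,b)\in\Gamma(A)$ by purity (first case) or $\delta((a,b)/A)\le0$ and so $(a,b)\in\Gamma(A)$ by relative $\Gamma$-closedness (second case) — in both cases $(a,b)\in\Gamma(A)$. Applying this to an $(a,b)\in\Gamma$ with $a\in\acl(A)$ (one exists, as $\pi_1(\Gamma)=\ga(F)$) shows $a\in A$, so $A$ is algebraically closed; applying it with $a\in A$, and to a $(c,b)\in\Gamma$ with $b\in A^\times$ (using $\pi_1(\Gamma)=\ga(F)$ and $\pi_2(\Gamma)=\gm(F)$), gives $\pi_1(\Gamma(A))=\ga(A)$ and $\pi_2(\Gamma(A))=\gm(A)$. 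Hence $A$ is a full $\Gamma$-subfield.
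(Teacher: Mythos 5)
The paper offers no proof of this Fact --- it is quoted from \cite{PEM} --- so the only question is whether your reconstruction actually establishes it. Your overall architecture (submodularity of $\delta$, hulls, the dimension function $d$, exchange as a formal consequence, then the identification of closed sets and the fullness computation) is the standard predimension-to-pregeometry argument, and the final paragraph on fullness is correct and unconditional. One minor slip first: $\ldim_\Q$ is only \emph{supermodular} here, not modular, because $\Gamma(B_1\vee B_2)=G(B_1\vee B_2)\cap\Gamma$ can strictly contain the divisible hull of $\Gamma(B_1)+\Gamma(B_2)$ (a point of $\Gamma$ may acquire both coordinates only in the compositum); the ``identity'' $\Gamma(B_1\vee B_2)\equiv\Gamma(B_1)+\Gamma(B_2)\pmod{\Gamma(A)}$ is false. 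The containment goes the right way, so submodularity of $\delta$ survives, but the claim should be corrected.

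The serious gap is your appeal to ``the non-negativity of $\delta$ supplied by the Ax-Schanuel property in force throughout the section''. No such hypothesis appears in the statement of the Fact or in the surrounding section, and the paper applies the Fact (via Proposition~\ref{blurred pregeom}, Fact~\ref{CCP theorem}, and the main theorems) precisely to $\Cexp$ and to the blurrings $\C_\h$, where non-negativity of $\delta$ over $\Q$ is Schanuel's conjecture and is not known. Without a lower bound on $\delta$, the minimum defining your hull $\hull{Ab}$ and the minimum defining $d(X)$ need not be attained (they can be $-\infty$), and everything downstream --- finite character, additivity of $d$, exchange, and the identification of the closed sets with the relatively $\Gamma$-closed subfields --- collapses; even the fact that an intersection of relatively $\Gamma$-closed subfields is relatively $\Gamma$-closed is not formal without it. This is why the cited results in \cite{PEM} do not take this route: there the closure operator is obtained unconditionally from $\Gamma$-derivations (derivations $D$ with $Dy=yDx$ for all $(x,y)\in\Gamma$), for which ``common zero-set of all such derivations vanishing on $X$'' is a pregeometry by pure linear algebra, the substantive work being to identify its closed sets with the relatively $\Gamma$-closed subfields. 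As written, your proof establishes the Fact only for full $\Gamma$-fields satisfying an Ax-Schanuel hypothesis, which is not the generality the paper requires.
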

We write this pregeometry as $\Gcl^F$, or just as $\Gcl$ if the field $F$ is understood. For an exponential field $\Fexp$ we write the same pregeometry as $\ecl^F$ or as $\ecl$.

It is not immediate from the definition that a given $\Gamma$-field should have proper relatively $\Gamma$-closed subfields. It is true for the reducts $\FEDE$ by the Ax-Schanuel property, and for $\Bexp$ by construction. For $\Cexp$ it also follows from the Ax-Schanuel theorem via its role in the proof of the countable closure property.
\begin{defn}
A $\Gamma$-field $F_\Gamma$ has the \emph{countable closure property} (CCP) if whenenver $A \subs F$ is finite then $\Gcl(A)$ is countable.
\end{defn}

\begin{fact}\cite[Lemma~5.12]{Zilber05peACF0}\label{CCP theorem}
$\Cexp$ has the countable closure property.
\end{fact}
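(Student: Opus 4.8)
The plan is to prove this by the standard ``finite character plus predimension iteration'' reduction, concentrating the analytic content in a single discreteness statement fed by the Ax--Schanuel theorem. First I would observe that, since $\ecl$ is a finitary pregeometry arising from the predimension $\delta$, for any $A \subs \C$ one has $\ecl(A) = \bigcup_{k<\omega} A_k$, where $A_0$ is the subfield generated by $A$ together with $\exp(A_0)$ and all $\exp$-preimages of its nonzero elements (iterated to closure), and $A_{k+1}$ is obtained from $A_k$ in the same way after also adjoining every $b \in \C$ lying in a finite tuple $\bar z$ which witnesses $\delta((\bar z, \exp\bar z)/A_k) \le 0$ with $\bar z$ not shrinkable. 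A finite tuple with $\delta \le 0$ over $\bigcup_k A_k$ already has parameters in some $A_k$, so $\bigcup_k A_k$ is relatively $\Gamma$-closed; it is clearly contained in $\ecl(A)$, hence equal to it. So it suffices to show that if $B$ is countable then only countably many new points are added in passing from $B$ to the next stage.

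Next I would parametrise the witnesses. If such a $b$ is added over $B$, then there is $n$ and a tuple $\bar z = (z_1,\ldots,z_n) \in \C^n$ with $b$ among its coordinates such that, writing $V = \loc(\bar z, \exp\bar z / B) \subs G^n$, the non-shrinkability of the configuration (together with exchange for $\ecl$) lets us take the $z_i$ to be $\Q$-linearly independent modulo $B$, forces $\dim V \le n$ via $\delta \le 0$, and forces $\dim V \ge n$ and in fact the rotundity of $V$ by the Ax--Schanuel property; moreover the same minimality prevents $V$ from being contained in a coset of $TJ + G^n(\C)$ for any proper algebraic subgroup $J \le \gm^n$, i.e.\ $V$ is ``free''. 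Since $B$ is countable there are only countably many algebraic subvarieties of $G^n$ defined over $B$, and hence, ranging over all $n$, only countably many possible $V$. Thus it is enough to show that for each free rotund $V$ of dimension $n$ the solution set $\Sigma_V \leteq \class{\bar z \in \C^n}{(\bar z, \exp\bar z) \in V(\C)}$ is countable.

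For the last and hardest step, note that $\Sigma_V$ is the preimage of the algebraic set $V(\C)$ under the entire holomorphic map $E : \C^n \to G(\C)^n$, $\bar z \mapsto (\bar z, \exp\bar z)$, so $\Sigma_V$ is a complex-analytic subset of $\C^n$ and a countable union of irreducible analytic components. I would show each component has dimension $0$, so $\Sigma_V$ is discrete, hence countable. If some component $S$ had positive dimension, then at a smooth point there is a nonconstant holomorphic curve $\gamma : \Delta \to \C^n$ with $\gamma(\Delta) \subs S$, so $(\gamma(t), \exp\gamma(t)) \in V$ for all $t$. Because $V$ is the locus of the original tuple and is free, the coordinate functions $\gamma_1,\ldots,\gamma_n$ must be $\Q$-linearly independent modulo the constant field $\C$: any relation $\sum m_i \gamma_i \in \C$ would push a positive-dimensional piece of $V$, hence all of $V$, into a coset of some $TJ + G^n(\C)$. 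Applying Ax's theorem (the function-field Ax--Schanuel theorem, for the field of meromorphic functions on $\Delta$ with $d/dt$, whose constants are $\C$) gives $\td_\C \C(\gamma_1,\ldots,\gamma_n,\exp\gamma_1,\ldots,\exp\gamma_n) \ge n+1$; but the $\gamma_i, \exp\gamma_i$ are coordinates of a point on the base change of $V$ to $\C$, so they generate a subfield of transcendence degree at most $\dim V = n$ over $\C$ --- a contradiction. Hence $\Sigma_V$ is countable, each $A_{k+1}$ is a countable union of countable sets, and $\ecl(A) = \bigcup_k A_k$ is countable.

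I expect the main obstacle to be exactly this last step: extracting the holomorphic curve from a hypothetical positive-dimensional component, and --- more delicately --- the combinatorial bookkeeping that converts the freeness/minimality of the witness variety into genuine $\Q$-linear independence of the curve's coordinates modulo the constants, so that Ax's theorem applies with the dimension count tight. A secondary technical point is that the Ax--Schanuel property recalled above is stated relative to the constant field $C = \ecl(\emptyset)$, whereas here it is needed over $\C$; this is handled by base change but should be stated carefully.
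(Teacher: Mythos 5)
The paper itself offers no proof of this Fact --- it is imported from Zilber's Lemma~5.12 --- so the comparison is with that standard argument, whose shape (iterate the predimension closure; reduce to counting, for each of countably many free rotund $V \subs G^n$ of dimension $n$ over a countable base $B$, the set $\Sigma_V$ of solutions of $(\bar z,\exp\bar z)\in V$; kill positive-dimensional analytic components by Ax's theorem) you have correctly reproduced in outline. The gap is in your last step. You assert that \emph{every} component of $\Sigma_V$ is zero-dimensional, and to get the $\Q$-linear independence of $\gamma_1,\dots,\gamma_n$ modulo $\C$ needed to run Ax you argue that a relation would push ``a positive-dimensional piece of $V$, hence all of $V$'' into a coset of some $TJ+G^n(\C)$. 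That ``hence all of $V$'' is unjustified: a positive-dimensional component of $\Sigma_V$ need not be Zariski-dense in $V$, so its Zariski closure can lie in a coset $\gamma_0+TJ$ without $V$ doing so, and freeness constrains $V$ itself, not its proper subvarieties. In fact the independence you want is automatically \emph{false} on any such component: if $S$ is a component of dimension $d\ge 1$ and $r=\ldim_\Q(\gamma/\C)$, Ax gives $\td_\C(\gamma,\exp\gamma)\ge r+d$, while the point lies on $V$ so $\td_\C(\gamma,\exp\gamma)\le \dim V=n$; hence $r\le n-d<n$. So your intended contradiction never engages exactly where it is needed. The paper's own Lemma~\ref{rotund fibre lemma} and the set $V^o=V^{\mathrm{reg}}\cap\bigcap_J V_J$ in Proposition~\ref{density implies G-closed} exist precisely because the analytic intersection with the blurred graph \emph{can} be positive-dimensional off a countable union of proper closed subvarieties; the same phenomenon occurs here.

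The repair is cheap and uses genericity you already set up and then discarded. Since $V=\loc(\bar z,\exp\bar z/B)$ with $B$ countable and (after enlarging) algebraically closed, you only need to count points of $\Sigma_V$ whose image is generic in $V$ over $B$. Each $V_J$ from Lemma~\ref{rotund fibre lemma} is defined over $B$, so a generic point lies in $V^o$; there $\dim(V\cap\gamma_0{+}TJ)\le\dim J=r$, whereas the Ax computation on the germ of the component through that point gives $\dim(V\cap\gamma_0{+}TJ)\ge\td_\C(\gamma,\exp\gamma)\ge r+d$, forcing $d=0$. Thus every such point is isolated in the analytic set $\Sigma_V$, and an analytic subset of $\C^n$ has only countably many isolated points, so the count closes. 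Your other concerns (extracting the holomorphic germ, and base-changing the Ax--Schanuel statement from the constant field $C$ to $\C$, or rather to the countable closed base $B$) are genuine but routine and handled as you indicate.
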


\begin{prop}\label{blurred pregeom}
Let $\Fexp$ be an exponential field and $\h$ a subgroup of $\gm(F)$. Let $\Gcl$ be the $\Gamma$-closure pregeometry on $F_\h$. Then for any subset $S \subs F$ we have
$\Gcl(S) = \ecl^F(S \cup \h)$. In particular, if $\h \subs \ecl^F(\emptyset)$ we have $\Gcl = \ecl^F$.
\end{prop}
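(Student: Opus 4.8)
The plan is to unwind the definitions of both pregeometries and show the two closure operators agree. Recall that $\Gcl(S)$ is the smallest relatively $\Gamma_\h$-closed subfield containing $S$, computed using the predimension $\delta_{\Gamma_\h}(b/A) = \td(b/A) - \ldim_\Q(b/\Gamma_\h(A))$, while $\ecl^F(S \cup \h)$ is the smallest relatively $\Gamma$-closed subfield containing $S \cup \h$, computed using $\delta_\Gamma(b/A) = \td(b/A) - \ldim_\Q(b/\Gamma(A))$ where $\Gamma$ is the graph of $\exp_F$. So first I would fix a $\Gamma_\h$-subfield $A$ of $F$ with $\h \subs \gm(A)$ (which will be automatic once $A$ contains $\h$, since we can pass to a field generated by $S\cup\h$) and compare $\Gamma_\h(A) = G(A) \cap \Gamma_\h$ with $\Gamma(A) = G(A) \cap \Gamma$: the key observation is that for such $A$, a point $(x,y) \in G(A)$ lies in $\Gamma_\h$ iff $y/\exp(x) \in \h \subs \gm(A)$, so $\ldim_\Q(b/\Gamma_\h(A))$ and $\ldim_\Q(b/\Gamma(A))$ differ only by the $\Q$-span of $\h$, which is already absorbed into $A$ via $\exp$. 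More precisely, I would show $\Gamma_\h(A)$ and $\Gamma(A) + (\{0\}\times\h)$ have the same divisible hull inside $\Gamma_\h$ when $\h \subs \gm(A)$, so the two predimensions agree on tuples over such $A$, and consequently the relatively $\Gamma_\h$-closed subfields containing $\h$ coincide with the relatively $\Gamma$-closed subfields containing $\h$.

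Next I would handle the direction $\Gcl(S) \supseteq \ecl^F(S\cup\h)$ and $\Gcl(S) \subseteq \ecl^F(S\cup\h)$ separately. For $\supseteq$: since $\Gcl(S)$ is a full $\Gamma_\h$-subfield, its projection to $\gm$ is all of $\gm(\Gcl(S))$, and in particular for any $x$ with $(x,1)\in\Gamma_\h$... — more to the point, I need $\h \subs \Gcl(S)$. This holds because each $h \in \h$ satisfies $(0,h) \in \Gamma_\h$, so $\{0\}\times\h \subs \Gamma_\h(\Gcl(S))$ once $h \in \Gcl(S)$; to get $h$ itself into $\Gcl(S)$ I observe that $(0,h)$ is a single point of $\Gamma_\h$ with $\delta_{\Gamma_\h}((0,h)/\Gcl(S)) = \td(h/\Gcl(S)) - \ldim_\Q((0,h)/\Gamma_\h(\Gcl(S))) \le 1 - 1 = 0$ as soon as $(0,h)$ is $\Q$-independent from $\Gamma_\h(\Gcl(S))$ modulo that group — but if it is dependent then $h$ is already (up to roots of unity and the group) in $\Gcl(S)$; either way relative $\Gamma_\h$-closedness forces $(0,h) \in \Gamma_\h(\Gcl(S))$, hence $h \in \Gcl(S)$. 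So $S \cup \h \subs \Gcl(S)$, and since by the previous paragraph $\Gcl(S)$ is relatively $\Gamma$-closed (it contains $\h$), it contains $\ecl^F(S\cup\h)$. For $\subseteq$: the field $\ecl^F(S\cup\h)$ contains $\h$ and is relatively $\Gamma$-closed, hence by the previous paragraph relatively $\Gamma_\h$-closed, and it contains $S$, so it contains $\Gcl(S)$. The two inclusions give equality, and the final clause $\h \subs \ecl^F(\emptyset) \Rightarrow \Gcl = \ecl^F$ is then immediate since $\ecl^F(S \cup \h) = \ecl^F(S)$ for all $S$.

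The main obstacle I anticipate is the bookkeeping in the claim that the two predimensions genuinely agree over every relevant base field, i.e. that replacing $\Gamma$ by $\Gamma_\h$ shifts neither $\td$ nor $\ldim_\Q$ in a way that matters once $\h$ is in the base. The subtlety is that $\ldim_\Q(b/\Gamma_\h(A))$ is computed in the quotient $\langle\Gamma_\h(A),b\rangle/\Gamma_\h(A)$, and one must check carefully that a $\Q$-linear dependence of $b$ over $\Gamma_\h(A)$ corresponds exactly to one over $\Gamma(A)$ given $\h \subs \gm(A)$ — this uses that $\Gamma_\h(A)/\Gamma(A) \cong (\{0\}\times\h)/(\{0\}\times(\h\cap\gm(A)^{\exp}))$ is entirely contained in $G(A)$ and that $\ldim_\Q$ of $b$ is insensitive to enlarging the subgroup within $G(A)$ since $\td(b/A)$ is already measured over $A$. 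One should also be slightly careful that "full $\Gamma_\h$-subfield" and purity of $\Gamma_\h(A)$ behave well under these identifications, but these are checked routinely from the definitions in Section~\ref{predim section}. I would also double-check the edge cases $\h = \{1\}$ (giving $\Gcl = \ecl^F$ directly) and $\h = \gm(F)$ (giving $\Gamma_\h = G(F)$, whence $\Gcl(S) = F$ for all $S$, consistent with $\ecl^F(S \cup \h) = \ecl^F(F) = F$) as sanity checks.
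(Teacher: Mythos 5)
Your proposal is correct and follows essentially the same route as the paper's proof: first showing $\h \subs \Gcl(S)$ via the computation $\delta((0,h)/A) \le 1-1 = 0$, and then identifying the relatively $\Gamma_\h$-closed subfields containing $\h$ with the relatively $\ecl$-closed ones by translating tuples of $\Gamma_\h$ by elements of $\{0\}\times\h$ so that the two predimensions match. The paper carries out the predimension comparison slightly more explicitly (writing $\delta_\h(b/A) = \delta_{\exp}(b-c/A)$ for $c \in (\{0\}\times\h)^n$ with $b-c$ in the graph of $\exp$), but this is exactly the bookkeeping you flag and resolve in your final paragraph.
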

\begin{proof}
First we show that $\h \subs \Gcl(\emptyset)$. Suppose $A$ is a $\Gamma$-subfield of $F$, and $h \in \h$. Then $(0,h) \in \Gamma_\h$. If $(0,h) \notin \Gamma_\h(A)$ then $\ldim_\Q((0,h)/\Gamma(A)) = 1$. But $\td((0,h)/A) \le 1$, so $\delta((0,h)/A) \le 1-1=0$. So $h \in \Gcl(A)$. So $h$ is in every relatively $\Gamma$-closed subfield of $F$, so in particular in $\Gcl(\emptyset)$. Thus $\h \subs \Gcl(\emptyset)$.

Now we will show that the $\Gcl$-closed subsets of $F$ are the same as the $\ecl$-closed subsets which contain $\h$. We write $\G$ for the graph of $\exp_F$. Suppose that $A$ is $\ecl$-closed in $F$, and $\h \subs A$. Let $b \in \Gamma_\h^n$ and let $c \in (\{0\} \cross \h)^n$ such that $b-c \in \G^n$, writing the group operation on $\G^n$ additively. Write $\delta_\h$ for the predimension in the sense of $F_\h$, and $\delta_{\exp}$ for the predimension in the sense of $\Fexp$. Then we have
\begin{eqnarray*}
\delta_\h(b/A) & = & \td(b/A) - \ldim_\Q(b/\Gamma_\h(A))\\
  		& = & \td(b-c/A) - \ldim_\Q(b-c/\Gamma_\h(A))\\
  		& = & \td(b-c/A) - \ldim_\Q(b-c/\G(A))\\
		& = & \delta_{\exp}(b-c/A)
\end{eqnarray*}
Thus if $\delta_\h(b/A) \le 0$ then $\delta_{\exp}(b-c/A) \le 0$ so, since $A$ is $\ecl$-closed, $b-c \in A$ and hence $b \in A$. So $A$ is $\Gcl$-closed. Conversely if $A$ is $\Gcl$-closed and $b \in \G^n$ with $\delta_{\exp}(b/A) \le 0$ then the above calculation with $c=0$ shows that $\delta_\h(b/A) = \delta_{\exp}(b/A) \le 0$, so $b \in A$. So $A$ is $\ecl$-closed.
\end{proof}

\begin{cor}\label{CCP cor}
If $\Fexp$ has the CCP and \h\ is countable then $F_\h$ has the CCP. \qed
\end{cor}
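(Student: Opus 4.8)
The plan is to deduce this immediately from Proposition~\ref{blurred pregeom} together with the finite character that is built into the notion of a pregeometry. First I would fix a finite $A \subs F$ and apply Proposition~\ref{blurred pregeom} to get $\Gcl(A) = \ecl^F(A \cup \h)$, where $\Gcl$ denotes the $\Gamma$-closure pregeometry on $F_\h$. Since $A$ is finite and $\h$ is countable, the set $A \cup \h$ is countable.

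Next I would use that $\ecl^F$, being the closure operator of a pregeometry (the Fact quoted just before Proposition~\ref{blurred pregeom}), has finite character, so
\[\ecl^F(A \cup \h) = \bigcup\left\{\ecl^F(S) \,:\, S \subs A\cup\h,\ S \text{ finite}\right\}.\]
A countable set has only countably many finite subsets, and for each finite $S \subs A \cup \h$ the set $\ecl^F(S)$ is countable because $\Fexp$ has the CCP. Hence $\Gcl(A) = \ecl^F(A \cup \h)$ is a countable union of countable sets, so it is countable. As $A$ was an arbitrary finite subset of $F$, this says exactly that $F_\h$ has the CCP.

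I do not anticipate a genuine obstacle here: the argument is entirely routine once Proposition~\ref{blurred pregeom} is in hand. The only point worth isolating is the passage from ``closure of finite sets is countable'' to ``closure of countable sets is countable'', which relies on the finite character of $\ecl^F$; this is part of what it means for the relatively $\Gamma$-closed subfields to form a pregeometry, so nothing further needs to be verified.
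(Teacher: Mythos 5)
Your argument is correct and is exactly the (unwritten) argument the paper intends: the corollary is stated with a \qed precisely because it follows immediately from Proposition~\ref{blurred pregeom} via the finite character of the pregeometry, as you spell out. Nothing further is needed.
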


\begin{lemma}\label{blurred Ax-Schanuel}
Let $\Fexp$ be any exponential field, let $C$ be any relatively $\ecl$-closed subfield of $\Fexp$, and let $\GBE$ be the graph of exponentiation blurred by $\gm(C)$. Then $\tuple{F;+,\cdot,\GBE}$ satisfies
  the Ax-Schanuel property.
\end{lemma}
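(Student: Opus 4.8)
The plan is to reduce the Ax--Schanuel property for $\GBE$ to the Ax--Schanuel property for the bare exponential field $\Fexp$, exactly as in the predimension computation of Proposition~\ref{blurred pregeom}. Recall that the Ax--Schanuel property for the $\Gamma$-field $\tuple{F;+,\cdot,\GBE}$ asserts: if $a \in \GBE^n$ satisfies $\td(a/C') < n+1$ for the appropriate constant field $C'$, then $a \in TJ + G^n(C')$ for some proper algebraic subgroup $J$ of $\gm^n$. The first thing to pin down is what plays the role of the ``field of constants'' in this blurred setting: since $\GBE$ is the blurring of $\G$ by $\gm(C)$, the fibre set $\class{x \in F}{(x,1) \in \GBE}$ is $\class{x}{\exp(x)^{-1} \in \gm(C)} = \class{x}{\exp(x) \in \gm(C)}$, and one checks (using that $C$ is $\ecl$-closed, hence a full $\Gamma$-subfield) that this is a subfield containing $C$; in fact the relevant constant field is $C$ itself. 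So I would first record that the constant field of $\GBE$ equals $C$.

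Next, given $a = (a_1,\dots,a_n) \in \GBE^n$ with $\td(a/C) < n+1$, I would choose, coordinatewise, elements $c_i \in \{0\}\times\gm(C)$ such that $b_i \leteq a_i - c_i$ lies in $\G$ (possible by the definition of the blurring by $\gm(C)$), and set $b = (b_1,\dots,b_n) \in \G^n$. Since the $c_i$ have coordinates in $C$, we have $\td(b/C) = \td(a/C) < n+1$. Now apply the ordinary Ax--Schanuel theorem to $b \in \G^n$ over the field $C$: this gives a proper algebraic subgroup $J$ of $\gm^n$ with $b \in TJ + G^n(C)$. Since $a = b + c$ and $c \in G^n(C) \subseteq TJ + G^n(C)$ (because $G^n(C)$ is a subgroup and $TJ$ contains $0$), we conclude $a \in TJ + G^n(C)$, which is exactly the Ax--Schanuel property for $\tuple{F;+,\cdot,\GBE}$.

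The only genuine subtlety — and the place I would be most careful — is the choice of reference field for the application of Ax--Schanuel. The classical Ax--Schanuel statement over a bare exponential field gives the conclusion over $\emptyset$ (or over the constants of $F$, which for $\Cexp$ is $\Q$), not automatically over an arbitrary $\ecl$-closed $C$. To handle this I would invoke the relative form of Ax--Schanuel: because $C$ is relatively $\ecl$-closed in $\Fexp$, it is a full $\Gamma$-subfield (by the cited Fact on the $\ecl$-pregeometry), and the predimension is computed over $C$; the Ax--Schanuel property over $C$ then follows from the standard one by the usual pregeometry/localization argument (additivity of $\delta$, and the fact that $\delta(b/C) \le 0$ forces $b \in \G(C)$ when $C$ is $\ecl$-closed). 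For exponential fields in which the Ax--Schanuel property is taken as an axiom (as for $\Bexp$) or is a theorem (as for $\Cexp$, by Ax's theorem), this relative version is available, so no circularity arises. Once that reference-field point is settled, the argument is the short coordinatewise translation above.
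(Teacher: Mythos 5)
Your proof is correct and follows essentially the same route as the paper: translate $a \in \GBE^n$ to $b = a - c \in \G^n$ with $c \in G^n(C)$, note that $\td(b/C) = \td(a/C)$ and that $a$ and $b$ lie in the same $TJ$-coset modulo $G^n(C)$, and extract the proper subgroup $J$ from the relative $\ecl$-closedness of $C$. The only clarification worth making is that the ``relative Ax--Schanuel over $C$'' you invoke need not be derived from the classical theorem (and no Schanuel-type hypothesis on $\Fexp$ is required anywhere): it is immediate from the hypothesis that $C$ is relatively $\ecl$-closed, since $b \notin \G^n(C)$ forces $\delta(b/C) \ge 1$, hence $\ldim_\Q(b/\G(C)) \le \td(b/C) - 1 < n$, which is exactly what produces the proper subgroup.
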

\begin{proof}
Suppose that $a \in \GBE^n$ satisfies $\td(a/C) < n+1$.
  Let $J$ be the smallest algebraic subgroup of $\gm^n$ such that $a \in TJ + G^n(C)$. If $a \in G^n(C)$ then we can take $J = \{1\}$ and we are done. So assume not.
  
By the definition of $\GBE$, $a = (b,c e^b)$ for some $b \in F^n$ and $c \in \gm(C)^n$, so $\td(b,e^b/C) = \td(a/C) < n+1$ and $(b,e^b)$ and $a$ lie in $C$-cosets of the same $TJ$. Since $C$ is relatively $\ecl$-closed in $\Fexp$ we have $1 \le \delta(b,e^b/C) = \td(b,e^b/C) - \ldim_\Q(b/C)$, and so $\ldim_\Q(b/C) < n$. Thus $J$ is a proper algebraic subgroup of $\gm^n$. So the Ax-Schanuel property holds for  $\tuple{F;+,\cdot,\GBE}$.
\end{proof}

\section{$\Gamma$-closedness axioms}\label{G-closed section}
 
There are a number of slight variants of the $\Gamma$-closedness axiom. The version \emph{full $\Gamma$-closedness} given in Fact~\ref{TEDE axioms} is relatively simple and, importantly, is first-order expressible. However it is useful to make further restrictions on the varieties $V$ which occur in it. For any $a \in F$ the subvariety $V_a$ of $G^1$ given by $x=a$ is rotund, since it has dimension 1. If $F_\Gamma$ is an algebraically closed $\Gamma$-field which satisfies full $\Gamma$-closedness then for any $a \in F$ there is $b \in \gm(F)$ such that $(a,b) \in \Gamma \cap V_a$. Thus $\pi_1(\Gamma) = F$. Similarly $\pi_2(\Gamma) = \gm(F)$, so $F_\Gamma$ is necessarily a full $\Gamma$-field. We can axiomatize fullness directly, and it is useful to exclude these subvarieties from the statement of the $\Gamma$-closedness axiom. More generally we can restrict to \emph{free} subvarieties, and indeed only those of exactly the critical dimension $n$.
\begin{defn}\label{free}
Let $F$ be an algebraically closed field and $V$ an irreducible subvariety of $G^n$, defined over $F$. Then $V$ is \emph{additively free} if $\pi_1(V) \subs \ga^n$ is not contained in any subvariety of $\ga^n$ defined by an equation of the form $\sum_{i=1}^n m_i x_i = c$, for any $m_i \in \Z$, not all zero, and any $c \in F$. $V$ is \emph{multiplicatively free} if $\pi_2(V) \subs \gm^n$ is not contained in any subvariety of $\gm^n$ defined by an equation of the form $\prod_{i=1}^n y_i^{m_i} = c$, for any $m_i \in \Z$, not all zero, and any $c \in F$. $V$ is \emph{free} if it is both additively free and multiplicatively free.
\end{defn}
\begin{fact}
If $F_\Gamma$ is a full $\Gamma$-field then it satisfies full $\Gamma$-closedness if and only if it satisfies
 \begin{description}
  \item[\textup{$\Gamma$-closedness}] For each $n \in
    \N$, and each free and rotund irreducible algebraic subvariety $V$ of
    $G^n$ of dimension $n$, the intersection $\Gamma^n \cap V$ is
    nonempty.
  \end{description}
\end{fact}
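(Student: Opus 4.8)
The implication from full $\Gamma$-closedness to $\Gamma$-closedness is immediate, since a free, rotund, irreducible subvariety of $G^n$ of dimension $n$ is in particular a rotund irreducible subvariety.

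For the converse, suppose $F_\Gamma$ is a full $\Gamma$-field satisfying $\Gamma$-closedness, and let $V\subseteq G^n$ be rotund and irreducible; we must produce a point of $\Gamma^n\cap V$. I would argue by induction on the pair $(n,\dim V)$ ordered lexicographically, reducing the general case to the special one appearing in $\Gamma$-closedness. Three facts drive the reductions. First, rotundity of a subvariety is invariant under the diagonal action of $\GL_n(\Z)$ on $G^n$: for $N\in\GL_n(\Z)$ one has $\dim M\cdot(N\cdot V)=\dim(MN)\cdot V$ and $\rk(MN)=\rk M$, while $M\mapsto MN$ is a bijection of $\Mat_n(\Z)$; moreover $\Gamma^n$ is $\GL_n(\Z)$-invariant, being a $\Mat_n(\Z)$-submodule of $G^n$. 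Hence $\Z$-linear changes of coordinate are harmless. Second, if $V$ is rotund and $\pi_1(V)$ is contained in a hyperplane $\{x_1=c\}$, then $y_1$ cannot be constant on $V$: otherwise $E_{11}\cdot V$, where $E_{11}=\mathrm{diag}(1,0,\ldots,0)$, is a single point, contradicting $\dim E_{11}\cdot V\ge\rk E_{11}=1$; dually, if $\pi_2(V)$ lies in a coset $\{y_1=c\}$ then $x_1$ is non-constant on $V$. Third, fullness gives $\pi_1(\Gamma)=\ga(F)$ and $\pi_2(\Gamma)=\gm(F)$, so we may choose elements of $\Gamma$ with prescribed $\ga$- or $\gm$-coordinates.

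Now the case analysis. If $V$ is free of dimension $n$, apply $\Gamma$-closedness. If $V$ is not additively free, choose an integer relation $\sum m_ix_i=c$ on $\pi_1(V)$ with $(m_i)$ primitive (clear the gcd, using irreducibility of $V$), complete $(m_i)$ to the first row of some $N\in\GL_n(\Z)$, and replace $V$ by $N\cdot V$, so that $\pi_1(V)\subseteq\{x_1=c\}$; by the second fact $y_1$ is surjective onto $\gm(F)$ on $V$. One then shows there is $d\in\gm(F)$ with $(c,d)\in\Gamma$ for which the fibre $\{\,\bar{g}\in G^{n-1}:((c,d),\bar{g})\in V\,\}$ has a rotund irreducible component $W_0$; the induction hypothesis applied to $W_0$ (with smaller first index $n-1$) yields $\bar{\gamma}\in\Gamma^{n-1}\cap W_0$, whence $((c,d),\bar{\gamma})\in\Gamma^n\cap V$. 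The multiplicatively-non-free case is dual, peeling off a multiplicative coordinate via $\pi_2(\Gamma)=\gm(F)$. Finally, if $V$ is free but $\dim V>n$, one shows $V$ contains a rotund subvariety of strictly smaller dimension and applies the induction hypothesis (same $n$, smaller dimension); such a subvariety need not be free, but being rotund it is caught at a later stage of the induction.

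The substance of the argument, and the main obstacle, lies in the two claims left implicit: that a rotund variety of dimension exceeding $n$ contains a rotund subvariety of smaller dimension, and that in the peeling steps a value $d$ with $(c,d)\in\Gamma$ exists whose fibre has a rotund component. The second is delicate because the set of admissible $d$ is merely a coset of $\{\,y\in\gm(F):(0,y)\in\Gamma\,\}$, which may be small — in the worst case a single point — so $d$ cannot simply be chosen generically; one must instead exploit the fact that rotundity of $V$ already constrains all the relevant fibres, the second of the three facts above being the simplest instance. Both claims are conditions quantified over the infinitely many matrices in $\Mat_n(\Z)$, but these fall into finitely many types according to kernel and image, so the genericity arguments go through since $F$ is algebraically closed. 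This is precisely the reduction carried out in the proof of Theorem~4.11 of \cite{TEDESV} for the family $\mathcal{S}=\{\,\gm^n:n\in\N\,\}$, which one may either quote or reproduce.
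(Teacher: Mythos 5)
Your easy direction, the $\GL_n(\Z)$-invariance of rotundity, and the overall inductive scheme are fine, and for what it is worth the paper's own ``proof'' is nothing more than a citation of \cite[Proposition~2.33]{TEDESV}. But the step you flag as delicate and then assert can be handled --- choosing $d$ with $(c,d)\in\Gamma$ so that the fibre of $V$ over $(c,d)$ has a rotund component --- is a genuine gap, and it cannot be closed at the level of generality at which you (and, strictly speaking, the Fact itself) are working. Rotundity of $V$ does \emph{not} constrain the one relevant fibre. Take $n=2$ and let $V\subseteq G^2$ be defined by $x_1=0$ and $x_2(y_1-1)=1$ (with $y_2$ unconstrained). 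This is a closed irreducible surface, it is not additively free, and a direct check over the finitely many shapes of $M\in\Mat_2(\Z)$ shows it is rotund of dimension $n=2$ (for $\rk M=2$ the second column $(b,d)$ of $M$ is nonzero, so $(bx_2,dx_2)$ recovers $x_2$ and hence $y_1$, and then $y_2$ up to roots of unity; the rank-one cases are similar). Yet in any full $\Gamma$-field with $(0,y)\in\Gamma\iff y=1$ --- for instance $\Bexp$, which satisfies strong $\Gamma$-closedness and hence $\Gamma$-closedness --- every point of $\Gamma^2\cap V$ would have $y_1=1$ and so $x_2\cdot 0=1$. Thus $\Gamma^2\cap V=\emptyset$: the fibre over the unique admissible point $(0,1)$ is empty, and no refinement of the genericity argument can produce one.

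What actually makes the converse direction work, both in \cite[Proposition~2.33]{TEDESV} and in every place this Fact is invoked in the present paper, is the Fibres axiom of $\TEDE$: the set $\{y\in\gm(F):(0,y)\in\Gamma\}$ equals $\gm(C)$ for an algebraically closed (hence infinite, hence Zariski-dense) subfield $C$, and dually for $\{x:(x,1)\in\Gamma\}$. Under that hypothesis the admissible $d$ range over a Zariski-dense coset, so $d$ \emph{can} be chosen generically, avoiding the finitely many points outside the image of $y_1$ on $V$ and the proper closed locus over which the fibre fails to have a rotund component of the right dimension; this is where the fibre dimension theorem does its work, and it is exactly the genericity you tried to extract from rotundity alone. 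So your outline is the right one only after adding the Fibres hypothesis to the statement; without it the key step, and indeed the statement itself, fails. Separately, the other claim you defer --- that a rotund $V$ with $\dim V>n$ contains a rotund subvariety of dimension $n$ (generic hyperplane sections) --- is true but is itself a substantive lemma of \cite{TEDESV}, not a routine remark.
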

\begin{proof}
See \cite[Proposition~2.33]{TEDESV}, where it is even shown that further restrictions on $V$ can be given (perfect rotundity) while retaining an equivalent form of the axiom.
\end{proof}

The axiomatization of $\Bexp$ uses a stronger version of $\Gamma$-closedness, asking not just that $\Gamma^n \cap V$ is non-empty but for the existence
 of points which are suitably generic.
\begin{fact}
Up to isomorphism there is exactly one model of the following axioms of each uncountable cardinality, and all the models are quasiminimal.
  \begin{description}
  \item[\textup{Full $\Gamma$-field}] $F_\Gamma$ is a full $\Gamma$-field. 
  \item[\textup{Standard fibres}] $(0,y) \in \Gamma \iff y=1$, and there is a transcendental $\tau$ such that $(x,1) \in \Gamma \iff x \in \tau\Z$.
  \item[Schanuel property] If $a \in \Gamma^n$ satisfies $\td(a/\Q) < n$ then
    there is a proper algebraic subgroup $J$ of $\gm^n$ such that $a \in TJ$.
  \item[\textup{Strong $\Gamma$-closedness}] 
  For each $n \in  \N$, and each free and rotund irreducible algebraic subvariety $V$ of $G^n$, of dimension $n$, and each finite tuple $a \in \Gamma^n$, there is  $ b \in \Gamma^n \cap V$ such that $b$ is $\Q$-linearly independent over $a$.
\item[CCP] $F_\Gamma$ has the countable closure property.
  \end{description}
\end{fact}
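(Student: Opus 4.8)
The plan is to recognise these five axioms as those of a \emph{quasiminimal pregeometry class} --- the framework used to characterise \Bexp\ up to isomorphism --- with the canonical pregeometry taken to be the $\Gcl$-closure operator of Section~\ref{predim section}. Note that ``Standard fibres'' forces $\Gamma$ to be the graph of a surjective exponential with kernel $\tau\Z$, so there is genuinely no blurring and $\Gcl$ coincides with $\ecl$. Granting that the models of these axioms do form a quasiminimal pregeometry class, the general categoricity-and-quasiminimality theorem for such classes yields both conclusions at once. Concretely I would take the objects to be the $\Gamma$-fields satisfying the axioms, declare the $\Gcl$-closed subsets to be the closed sets of the pregeometry, and then verify the class axioms in turn.

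The structural bookkeeping is routine. That $\Gcl$ is a pregeometry with the exchange property is the Fact recalled in Section~\ref{predim section}; the Schanuel property makes it nontrivial --- every finite set has a proper closure --- and, together with ``Standard fibres'', pins down $\Gcl(\emptyset)$ as a fixed countable model, the prime model of the class. The class is closed under unions of increasing chains because, once $\Gcl$ is replaced by its finitary approximations, the axioms are $\forall\exists$ over the $\Gamma$-field language; and the countable closure property is imposed directly, so every finitely generated model is countable.

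The content is $\aleph_0$-homogeneity over $\emptyset$ and over $\Gcl$-closed sets: an isomorphism between finite, or $\Gcl$-closed, $\Gamma$-subsets of two models of the axioms should extend, one element at a time, to any prescribed further element. Granting this, the general theory also supplies the higher-amalgamation (``excellence'') that the original treatment of \Bexp\ had to check by hand, so the homogeneity statement is where the work lies. The back-and-forth splits into two cases. For a generic element $a$ over a $\Gcl$-closed set $H \subs M$ with partial isomorphism $f \colon H \to H'$, one argues that the quantifier-free type of $a$ over $H$ is forced: it is the generic type, in which $a$ and the $\Gamma$-points sitting over the $\Q$-multiples of $a$ are as transcendental over $H$ as the Ax--Schanuel inequality permits. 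To realise the matching type in $M'$ one invokes \textbf{strong $\Gamma$-closedness}: the locus over $H'$ of the relevant $\Q$-span is a free, rotund variety of the critical dimension, hence carries a $\Gamma$-point that is $\Q$-linearly independent over $f(H)$, and the Schanuel property in $M'$ then forces this point to have the correct transcendence degree, so that $f$ extends. For $a \in \Gcl(H)$ the same machinery gives uniqueness of $\Gcl(Hc)$ over $Hc$ for finite $c$, reducing the extension problem to finite-dimensional amalgamation.

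The step I expect to be the real obstacle is the field-theoretic core of this back-and-forth: extending a $\Gamma$-field isomorphism requires controlling the divisible hull of a $\Gamma$-point and the finite extensions generated by its $\gm$-roots --- a Kummer-theory / thumbtack-type statement rather than soft model theory --- and it is exactly here that $\Gamma$-closedness alone is insufficient and its strong form is indispensable, as recounted in the introduction. Once $\aleph_0$-homogeneity is in hand, the categoricity machinery developed for \Bexp\ (see \cite{Zilber05peACF0}, in the streamlined form of \cite{PEM}) applies verbatim: the class has exactly one model in each uncountable cardinal, its countable models are precisely the $\Gcl$-finite-dimensional ones, and every model --- in particular the unique one of any prescribed uncountable size --- is quasiminimal.
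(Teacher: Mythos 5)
The paper offers no proof of this Fact beyond a citation: it is the main theorem of \cite{Zilber05peACF0}, with the fuller account in \cite[Theorem~9.1]{PEM}, translated into $\Gamma$-field language. Your outline accurately reproduces the architecture of that cited proof --- models of the axioms form a quasiminimal pregeometry class for $\Gcl=\ecl$, the real work is $\aleph_0$-homogeneity over closed sets (strong $\Gamma$-closedness to realise the generic types, a Kummer-theoretic ``thumbtack'' lemma for uniqueness of closures), and excellence plus categoricity and quasiminimality then follow from the general machinery of \cite{OQMEC} and \cite{PEM} --- so it is essentially the same approach as the one the paper relies on.
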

\begin{proof}
Apart from a straightforward translation of the axioms into the language of $\Gamma$-fields, this is the main theorem of \cite{Zilber05peACF0}. A more complete account of the proof is given in \cite[Theorem~9.1]{PEM}.
\end{proof}
\begin{defn}\label{B defn}
The model of the above axioms of cardinality continuum is the exponential field denoted $\Bexp$.
\end{defn}

The exponential field $\Bexp$, or at least the model of the axioms of dimension $\aleph_0$, is constructed using the \Fraisse-Hrushovski amalgamation-with-predimension technique. The strong $\Gamma$-closedness axiom directly captures a \emph{richness} property of the \Fraisse\ limit, that every finitely generated $\Gamma$-field extension $B$ of a finitely generated $\Gamma$-subfield $A$ of $\Bexp$ which is \emph{strong} (preserves the Schanuel property) but of finite rank must be realised inside $\Bexp$. As it stands, it appears not to be first-order expressible. However, if the Zilber-Pink conjecture for tori is true then strong $\Gamma$-closedness is actually equivalent to $\Gamma$-closedness (given the other axioms) \cite[Theorem~5.7]{ECFCIT}.

In Section~2 of the paper \cite{TEDESV}, the countable saturated model of $\TEDE$ is also obtained by a Hrushovski-Fra\"iss\'e amalgamation-with-predimension construction. Formulating the analogous richness property in the differential equation setting is more tricky because of the constant field and because the Ax-Schanuel property is more complicated than the Schanuel property. Indeed, the formulation (SEC) in \cite[Definition~2.30]{TEDESV} is not completely correct and we take this opportunity to correct it. The formulation given here is from \cite[Definition~11.1]{PEM}, and the fact that it captures the richness property is Proposition~11.2 there.
\begin{defn}
Let $F_\Gamma$ be a full $\Gamma$-field and $C$ a full $\Gamma$-subfield of $F$. Then an \emph{Ax-Schanuel pair over $C$} is a pair $(V,a)$ where $a$ is a finite tuple from $\Gamma(F)$ which is $\Q$-linearly independent over $\Gamma(C)$ and $V \subs G^n$ is a free and rotund, irreducible subvariety of $G^n$ of dimension $n$ defined over $C(a)$ such that for $b \in V$, generic over $C(a)$, the locus $W \leteq \loc(a,b/C)$ is strongly rotund.

A subvariety $V \subs G^n$ is \emph{Ax-Schanuel good over $C$} if there is $a \in \Gamma^n$ such that $(V,a)$ is an Ax-Schanuel pair over $C$.

Using these definitions we can state two further variants of $\Gamma$-closedness axioms.
\begin{description}
  \item[\textup{Generic Strong $\Gamma$-closedness over $C$}] For each Ax-Schanuel pair $(V,a)$ over $C$, there is $b \in \Gamma^n \cap V$ which is $\Q$-linearly independent over $\Gamma(C) \cup a$.
  \item[\textup{Generic $\Gamma$-closedness over $C$}] For each $V$ which is Ax-Schanuel good over $C$, the intersection $\Gamma^n \cap V$ is non-empty.
  \end{description}
\end{defn}
It is not obvious that either generic strong $\Gamma$-closedness over $C$ or generic $\Gamma$-closedness over $C$ are first-order expressible. On the face of it, they appear to state some saturation property. However, an important breakthrough from \cite{PEM} shows that they are equivalent to each other and it follows that in the context of $\TEDE$ they are equivalent to $\Gamma$-closedness. 
\begin{fact}\label{G-closed equivalences}
If $F_\Gamma$ is a full $\Gamma$-field which is $\Gamma$-closed and $C$ is a relatively $\Gamma$-closed subfield then $F_\Gamma$ is generically strongly $\Gamma$-closed over $C$.
\end{fact}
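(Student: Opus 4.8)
The plan is to split the assertion into two implications: that $\Gamma$-closedness implies generic $\Gamma$-closedness over $C$, and that generic $\Gamma$-closedness over $C$ implies generic strong $\Gamma$-closedness over $C$. Only the second carries any weight; it is the main result of \cite{PEM}, and in a final write-up I would simply invoke it, but it is worth recording the shape of the argument.

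The first implication is immediate. If $V \subs G^n$ is Ax-Schanuel good over $C$, say $(V,a)$ is an Ax-Schanuel pair over $C$, then $V$ is by definition a free, rotund, irreducible subvariety of $G^n$ of dimension $n$, defined over the subfield $C(a)$ of $F$. As $F_\Gamma$ is a full $\Gamma$-field satisfying $\Gamma$-closedness, it satisfies full $\Gamma$-closedness, so $\Gamma^n\cap V\neq\emptyset$. This is exactly what generic $\Gamma$-closedness over $C$ demands, and it needs nothing of $C$ beyond being a subfield.

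For the second implication, let $(V,a)$ be an Ax-Schanuel pair over $C$ and let $W\leteq\loc(a,b/C)\subs G^{2n}$ be the locus of $(a,b)$ for $b$ generic in $V$ over $C(a)$, which by hypothesis is strongly rotund. One wants $b\in\Gamma^n\cap V$ with $b$ $\Q$-linearly independent over $\Gamma(C)\cup a$, i.e.\ with $\ldim_\Q(a,b/\Gamma(C))=2n$; here the hypothesis that $C$ is relatively $\Gamma$-closed enters, via the purity of $\Gamma(C)$ in $\Gamma$ (so the $\Q$-linear algebra over $\Gamma(C)$ behaves) and the fact that $a$ is already $\Q$-linearly independent over $\Gamma(C)$. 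Each way this can fail is a nontrivial relation $N\cdot(a,b)\in\Gamma(C)$ for a nonzero integer vector $N$, and strong rotundity of $W$ says precisely that $\dim N\cdot W>\rk N\ge1$ for every such $N$, so each individual relation defines a proper subset of $W$ that is missed by any point generic in $W$ over $C$. The argument of \cite{PEM} manufactures a point of $\Gamma^{2n}\cap W$ that is generic enough for all the relevant relations to fail on it — not a point generic over $C$, which would be a saturation statement, but one obtained by an induction that repeatedly applies generic $\Gamma$-closedness to a descending chain of auxiliary varieties, at each stage removing one more potential linear dependence and then checking that the variety so obtained is still Ax-Schanuel good over $C$ so that the axiom applies again. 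Strong rotundity, as opposed to mere rotundity, is what makes this possible: it is the property inherited by the intermediate loci, and what keeps the predimension inequalities strict enough for the Ax-Schanuel-pair condition — in particular the strong rotundity demanded of the locus $W$ in its definition — to survive each cut.

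The main obstacle is precisely this last point, that cutting down preserves Ax-Schanuel-goodness over $C$: this is the technical core of \cite{PEM}, and the definitions of strong rotundity and of an Ax-Schanuel pair are shaped so as to make it work. Everything else is routine: the first implication is a one-line check, and the remaining care is only the purity bookkeeping that makes $\ldim_\Q(\,\cdot\,/\Gamma(C))$, and hence the very notion of genericity over $C$, well posed — which is exactly what the hypothesis on $C$ provides.
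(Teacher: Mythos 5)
Your decomposition is exactly the paper's: the first implication ($\Gamma$-closedness gives generic $\Gamma$-closedness over $C$) is the same immediate check, and the second is precisely the citation of the equivalence from \cite{PEM} (Proposition~11.5 there), which the paper likewise invokes without reproving. The proposal is correct and matches the intended proof.
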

\begin{proof}
By Proposition~11.5 of \cite{PEM}, generic strong $\Gamma$-closedness over $C$ is equivalent to generic $\Gamma$-closedness over $C$. The latter is an immediate consequence of $\Gamma$-closedness.
\end{proof}

The importance of this fact for us is that it has the following corollary.
\begin{fact}[{\cite[Corollary~11.7]{PEM}}]\label{G-closed + CCP implies qm}
If $F$ is a full $\Gamma$-field which is $\Gamma$-closed and has the countable closure property then $F$ is quasiminimal.
\end{fact}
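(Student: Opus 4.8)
The plan is to recognise $F_\Gamma$, together with the pregeometry $\Gcl$ of Section~\ref{predim section}, as a \emph{quasiminimal pregeometry structure} in the sense of Bays, Hart, Hyttinen, Kes\"al\"a and Kirby, and then to quote the general theorem that every such structure is quasiminimal in exactly the sense defined in the introduction. Two of the conditions in that definition hold for free: $\Gcl$ is a pregeometry on $F$ by the Fact recalled in Section~\ref{predim section}, and it has countable closures by the CCP hypothesis. So the real content is the homogeneity part of the definition: that partial automorphisms between finite tuples extend, and that the ``generic'' type over a $\Gcl$-closed subset is determined up to the action of the automorphism group.

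To establish these I would build a back-and-forth system whose maps are field isomorphisms $\theta\colon A\to A'$ between finitely generated $\Gamma$-subfields of $F$ that are \emph{strong} in $F$ --- so that every finite tuple $b$ from $\Gamma$ satisfies $\delta(b/A)\ge 0$, and likewise over $A'$ --- and that carry $\Gamma(A)$ onto $\Gamma(A')$. Since a $\Gcl$-closed subfield is a full, relatively $\Gamma$-closed $\Gamma$-subfield, we may work inside fixed $\Gcl$-closed sets $C$, $C'$ matched by an extension of $\theta$. The crux is the extension step: given such a $\theta$ and an element $c\in F$, extend $\theta$ to a strong partial isomorphism whose domain contains $c$. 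This divides according to whether $c$ lies in $\Gcl(A)$.

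If $c\notin\Gcl(A)$, the self-sufficient hull $\hull{Ac}$ is a finite-rank strong extension of $A$, and --- because $C$ is relatively $\Gamma$-closed, so that every finite tuple over $C$ outside $\Gamma(C)$ has strictly positive predimension --- the data of this extension is encoded over $C$ by an Ax--Schanuel pair $(V,a)$ as in Section~\ref{G-closed section}: $a\in\Gamma^n$ records the $\Gamma$-content already present, $V\subseteq G^n$ is free, rotund, irreducible of dimension $n$ and defined over $C(a)$, and the locus of $(a,b)$ for generic $b\in V$ is strongly rotund. Transporting $(V,a)$ by $\theta$ gives an Ax--Schanuel pair $(V',a')$ over $C'$. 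As $F_\Gamma$ is $\Gamma$-closed, Fact~\ref{G-closed equivalences} makes it generically strongly $\Gamma$-closed over $C'$, so there is $b'\in\Gamma^n\cap V'$ that is $\Q$-linearly independent over $\Gamma(C')\cup a'$; a short predimension count --- since $C'$ is relatively $\Gamma$-closed, a $\Q$-linearly independent $n$-tuple $b'\in\Gamma^n\cap V'$ that failed to be generic over $C'(a')$ would, together with $a'$, have negative predimension over $C'$, which is impossible --- shows that $b'$ is generic in $V'$ over $C'(a')$, so $\theta$ extended by sending the left-hand realisation to $b'$ is a strong partial isomorphism whose domain can then be enlarged to a finitely generated $\Gamma$-subfield containing $c$. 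The transcendence-degree bookkeeping goes through the model theory of $\ACF_0$, and the $\Q$-linear part through the divisibility of $\Gamma$.

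If $c\in\Gcl(A)$, then $c$ lies in a finite-rank strong extension $\hull{Ac}$ of $A$ that is contained in $\Gcl(A)$, and one must show that any two realisations of $\tp(c/A)$ are conjugate over $A$; for the full homogeneity axioms this is needed uniformly over all $\Gcl$-closed sets, which is precisely the \emph{excellence} phenomenon of Zilber's original treatment, and I expect it to be the main obstacle. The clean resolution is to invoke the theorem of Bays et al.\ that excellence is automatic for any quasiminimal pregeometry structure, reducing us to the weaker finitary homogeneity axioms; those one checks by running the back-and-forth over countable $\Gcl$-closed sets, handling the bounded extensions by induction on $\Gcl$-dimension using the positivity of the predimension over $\Gcl$-closed sets, and the unbounded extensions as above. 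With all the axioms of a quasiminimal pregeometry structure in place, quasiminimality of $F_\Gamma$ --- every $\Aut(F_\Gamma/A)$-invariant subset, for countable $A$, being countable or co-countable --- follows formally; this is the argument carried out in \cite{PEM}.
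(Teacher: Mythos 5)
Your proposal reconstructs exactly the route the paper takes: the statement is quoted from \cite[Corollary~11.7]{PEM}, whose proof combines Fact~\ref{G-closed equivalences} (generic strong $\Gamma$-closedness over relatively $\Gamma$-closed subfields) with the back-and-forth verification that $F$ with $\Gcl$ is a quasiminimal pregeometry structure, the automatic-excellence theorem, and the resulting quasiminimality. The one imprecise step is your ``short predimension count'': if $b'$ fails to be generic in $V'$ over $C'(a')$ one only gets $\delta(a'b'/C') < \delta(a'/C')$, which contradicts nothing when $\delta(a'/C')>1$, so genericity must instead be extracted from the strong rotundity of the locus $W$ in the definition of an Ax--Schanuel pair --- but that is precisely what \cite[Proposition~11.5]{PEM} supplies, and you are in any case entitled to cite it.
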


\section{Quasiminimality and categoricity}\label{QM section}

The amalgamation-with-predimension method is used to produce the countable models of \TEDE\ and of the \Bexp-axioms. The uncountable quasiminimal models are then isolated using the method of quasiminimal pregeometry classes (also called quasiminimal excellent classes, because Shelah's excellence technique is what is used to build the uncountable models). Putting together results from \cite{TEDESV}, \cite{PEM}, and \cite{OQMEC} we can give a simple axiomatization for the quasiminimal models of \TEDE.
\begin{defn}
Let $\TEDE^*$ be the list of axioms consisting of the axioms for $\TEDE$  from Fact~\ref{TEDE axioms} except for non-triviality, together with
\begin{description}
  \item[CCP] $F_\Gamma$ has the countable closure property.
  \item[Constant field] $\td(C/\Q)$ is infinite.
  \end{description}
\end{defn}

\begin{prop}\label{qme prop} \
\begin{enumerate}
\item The class of models of $\TEDE^*$ is a quasiminimal pregeometry class with pregeometry $\Gcl$. 

\item Up to isomorphism there is exactly one model of each uncountable cardinality.

\item All models of $\TEDE^*$ are quasiminimal.

\item The non-trivial models of $\TEDE^*$ can also be defined as the prime models of $\TEDE$ over $C \cup B$ where $C$ is the fixed constant field with $\td(C/\Q) = \aleph_0$ and $B$ is an independent set of realisations of the unique generic type (the unique global 1-type of Morley rank $\omega$).
\end{enumerate}
\end{prop}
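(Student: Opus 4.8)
The plan is to obtain parts (2) and (3) as formal consequences of (1) via the general theory of quasiminimal pregeometry classes, and to deduce (4) from (1)--(3) together with the $\aleph_0$-stability of $\TEDE$. Indeed, once (1) is known, (2) and (3) are immediate: a quasiminimal pregeometry class is uncountably categorical, its members are all quasiminimal, and dimension is a complete isomorphism invariant for its members \cite{OQMEC} (this last point is used again in (4)). So the real content is in establishing (1) and (4).

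For (1), I would verify that $(\Mod(\TEDE^*),\Gcl)$ satisfies the axioms of a quasiminimal pregeometry class as formulated in \cite{OQMEC}. That $\Gcl$ is a pregeometry on each model (indeed on each full $\Gamma$-field) is the pregeometry fact of Section~\ref{predim section}, a consequence of the Ax-Schanuel property; its uniformity across the class --- that $\Gcl$-closures of finite tuples are governed by the quantifier-free data of transcendence degree and $\Q$-linear dimension over $\Gamma$ --- is read off the explicit predimension $\delta$ together with full $\Gamma$-closedness. Closure of the class under unions of $\Gcl$-chains is immediate from the $\forall\exists$ form of the axioms and the fact that relatively $\Gamma$-closed subfields are full $\Gamma$-subfields; the countable closure property is an explicit axiom. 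The substantive points are the uniqueness of the generic type and $\aleph_0$-homogeneity over $\Gcl$-closed subsets: here one reruns the amalgamation-with-predimension analysis of \cite[Section~2]{TEDESV}, but feeding in the \emph{correct} richness property, namely generic strong $\Gamma$-closedness over $C$, which holds by Fact~\ref{G-closed equivalences}, in place of the flawed property (SEC), to build the required back-and-forth systems over $\Gcl$-closed bases. The excellence axioms are then automatic by the main theorem of \cite{OQMEC}. Finally, the \textbf{Constant field} axiom is exactly what makes $\Mod(\TEDE^*)$ a single quasiminimal pregeometry class rather than a union of several: in any model $C\subs\Gcl(\emptyset)$, so $\td(C/\Q)\le\aleph_0$, and requiring it infinite forces equality and pins down the isomorphism type of the ``empty structure''; in fact the Ax-Schanuel property shows $C$ is relatively $\Gamma$-closed, so $\Gcl(\emptyset)=C$ in every model.

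For (4), recall that $\TEDE$ is $\aleph_0$-stable (a reduct of $\DCF_0$), so prime models over arbitrary parameter sets exist, and in a totally transcendental theory a model is prime over a set $S$ precisely when it is atomic over $S$. Fix the algebraically closed field $C$ with $\td(C/\Q)=\aleph_0$ and let $p$ be the unique generic $1$-type. First I would check that for an independent set $B$ of realizations of $p$ over $C$, the structure $\Gcl(C\cup B)$ is a model of $\TEDE^*$: it is a full $\Gamma$-field; its constant field is still $C$ since $C$ is relatively $\Gamma$-closed; it satisfies full $\Gamma$-closedness because a generic $\Gamma$-point of a rotund free variety of critical dimension has predimension $\le 0$ over its parameters and hence already lies in their $\Gcl$-closure; it has CCP; $\td(C/\Q)=\aleph_0$; and it is non-trivial provided $B\neq\emptyset$ (and for $B=\emptyset$, because $\TEDE$ is non-trivial, so $\mathrm{prime}(C)\supsetneq\Gcl(C)=C$). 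Being atomic over $C\cup B$, this model is the prime model over $C\cup B$, of $\Gcl$-dimension $\max(|B|,1)$. Conversely, given any non-trivial $M\models\TEDE^*$, identify its definable constant field with $C$, choose a maximal $\Gcl$-independent set $B\subs M$ over $C$ --- whose elements realize $p$ over $C$ since $C$ is $\Gcl$-closed --- and observe $M=\Gcl(C\cup B)$; since $\Gcl$-dimensions match and dimension is a complete invariant by (1)--(2), $M\cong\mathrm{prime}(C\cup B)$.

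The main obstacle is the $\aleph_0$-homogeneity over $\Gcl$-closed subsets required for (1): this is exactly the place where the richness property (SEC) of \cite{TEDESV} was insufficient, and it is resolved only by invoking the equivalence of generic $\Gamma$-closedness with generic strong $\Gamma$-closedness established in \cite{PEM}. Everything else --- the pregeometry axioms, chain closure, CCP, and the translations between the $\Gamma$-field language, the differential language, and the axiomatic framework of \cite{OQMEC} --- is routine bookkeeping.
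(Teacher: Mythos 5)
Your proposal is correct in substance and rests on exactly the same external inputs as the paper's proof (the corrected richness property via Fact~\ref{G-closed equivalences}, the amalgamation analysis of \cite[Section~2]{TEDESV}, the categoricity and quasiminimality machinery of \cite{OQMEC}, and $\aleph_0$-stability of $\TEDE$ for part (4)), but it is packaged differently. For (1) you propose to verify the quasiminimal pregeometry class axioms directly for $\Mod(\TEDE^*)$, rerunning the back-and-forth over $\Gcl$-closed bases; the paper instead takes the single countable saturated model $M$ of $\TEDE$, cites \cite[Theorem~6.9]{PEM} to see that $M$ is a quasiminimal pregeometry \emph{structure}, lets $\mathcal K(M)$ be the class it generates, and then uses \cite[Theorem~5.5]{OQMEC} to identify $\mathcal K(M)$ with $\Mod(\TEDE^*)$ by matching countable models (here the uniqueness statement \cite[Theorem~2.35]{TEDESV} plus Fact~\ref{G-closed equivalences} does the work you assign to the back-and-forth). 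The paper's route buys you the homogeneity axioms for free from the cited theorems, at the cost of having to check that the countable models of the $\Loo$-axioms are exactly the countable members of $\mathcal K(M)$; your route avoids that identification but makes you redo the homogeneity argument by hand. For (4) the two arguments are genuinely different in shape: you construct the candidate model as $\Gcl(C\cup B)$ and then claim it is atomic over $C\cup B$, whereas the paper starts from the abstract prime model (which exists by $\aleph_0$-stability and is automatically a model of $\TEDE$, hence full and $\Gamma$-closed), shows it inherits CCP by embedding it into a model of $\TEDE^*$, and concludes by categoricity in each dimension. The one step you should not wave through is the atomicity of $\Gcl(C\cup B)$ over $C\cup B$: this is true but requires the type analysis of $\TEDE$ and is not bookkeeping; the paper's order of argument (existence of the prime model first, identification with a model of $\TEDE^*$ second) is designed precisely to avoid having to prove it.
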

\begin{proof}
Let $M$ be the countable saturated model of $\TEDE$. By \cite[Theorem~2.35]{TEDESV}, $M$ is the unique countable model of $\TEDE$ which is infinite dimensional with respect to $\Gcl$, has $\td(C/\Q) = \aleph_0$ and satisfies the richness property of the \Fraisse\ limit, that is, it is generically strongly $\Gamma$-closed over $C$.

By Fact~\ref{G-closed equivalences}, $\Gamma$-closedness implies generic strong $\Gamma$-closedness over $C$. Thus $M$ is the unique countable model of $\TEDE$ which is infinite dimensional and has the fixed constant field $C$ of transcendence degree $\aleph_0$.

The model $M$ is also described in Section~9.7 of \cite{PEM}, where it is written as $M_{\mathrm{\Gamma\text{-}tr}}(C)$. By Theorem~6.9 of that paper, $M$ is a quasiminimal pregeometry structure. Let $\mathcal K(M)$ be the quasiminimal pregeometry class generated by $M$, that is, the smallest class of structures containing $M$ and all its $\Gcl$-closed substructures which is closed under isomorphism and under taking unions of directed systems of closed embeddings. By \cite[Theorem~4.2]{OQMEC}, in any quasiminimal pregeometry class the models are determined up to isomorphism by their dimension, and by CCP the cardinality of a model $M$ is equal to $\dim M + \aleph_0$.  The countable models in $\mathcal K(M)$ are precisely the countable models of $\TEDE$ minus non-triviality, such that $\td(C/\Q)$ is infinite. These axioms are first-order except the last which is $\Loo$-expressible, so together they are $\Loo$-expressible. Then by \cite[Theorem~5.5]{OQMEC}, the class $\mathcal{K}(M)$ is axiomatized by these axioms together with CCP.
 This proves (1) and (2). Item (3) is \cite[Lemma~5.1]{OQMEC}.

For (4), since $\TEDE$ is complete and $\aleph_0$-stable there is a prime model over $C \cup B$. Since there is a model $M_{|B|}$ of $\TEDE$ containing $C \cup B$ and satisfying CCP, the prime model must satisfy CCP so it can embed into $M_{|B|}$. Thus the prime model is a model of $\TEDE^*$. Since there is only one model of $\TEDE^*$ of dimension $|B|$, all non-trivial models of $\TEDE^*$ are of this form. 
\end{proof}
\begin{remark}
The trivial model of $\TEDE^*$ is the unique model of dimension 0 which is just the field $C$ with $\Gamma(C) = G(C)$.
\end{remark}

\section{The use of density to prove $\Gamma$-closedness}\label{density section}

We now prove the $\Gamma$-closedness property for $\CBE$ and $\CAE$, and in fact in a little more generality: for $\C_\h$ where $\h$ is a dense subgroup of $\gm(\C)$. As well as density with respect to the complex topology, the key tools are the Ax-Schanuel property and the fibre dimension theorem, which together are used to ensure that certain intersections are transversal and have the required dimension. See for example Section~2 of \cite{BMZ07} for a clear discussion of the fibre dimension theorem.

\begin{lemma}\label{rotund fibre lemma}
Suppose $V \subs G^n$ is irreducible and rotund, and $\dim V = n$. Let $J$ be a connected algebraic subgroup of $\gm^n$ given by a matrix equation $y^M = 1$. Let $\gamma \in V(\C)$. Then there is a Zariski-open subset $V_J$ of $V$ such that if $\gamma \in V_J$ then $\dim (V \cap \gamma{+}TJ) \le \dim J$.
\end{lemma}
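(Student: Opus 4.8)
The plan is to decompose $V$ according to the restriction of the quotient map $q\colon G^n \onto G^n/TJ$, and use rotundity to control the generic fibre dimension. Observe that $TJ$ is the subgroup of $G^n$ cut out by $Mx=0$, $y^M=1$, and the map $M\colon G^n \to G^n$ given by $(x,y)\mapsto(Mx,y^M)$ has $TJ$ as (the identity component of) its kernel, so the image $M\cdot V$ is, up to a finite-to-one map and a translation, a model for $V/TJ$. Concretely, the fibres of $M\restrict{V}$ over its image are, generically, finite unions of cosets $V \cap (\gamma + TJ)$, so the generic fibre of $M\restrict{V}$ has dimension $\dim (V\cap(\gamma+TJ))$ for $\gamma$ in a suitable Zariski-open subset of $V$.

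The key steps are: (i) let $V_J$ be the Zariski-open subset of $V$ on which the fibre dimension of $M\restrict{V}$ attains its generic (minimal) value $d$; such a set exists and is nonempty by upper semicontinuity of fibre dimension (the fibre dimension theorem). (ii) By the fibre dimension theorem, $\dim V = \dim(M\cdot V) + d$, so $d = \dim V - \dim(M\cdot V) = n - \dim(M\cdot V)$. (iii) By rotundity of $V$ applied to the matrix $M$, $\dim(M\cdot V)\ge \rk M$. (iv) Since $J$ is cut out by $y^M=1$ and is an algebraic subgroup of $\gm^n$, its dimension is $\dim J = n - \rk M$. (v) Combining: for $\gamma\in V_J$, $\dim(V\cap(\gamma+TJ)) = d = n - \dim(M\cdot V) \le n - \rk M = \dim J$, which is the desired bound.

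The one point that needs a little care — and which I expect to be the main obstacle to making the argument fully rigorous — is the identification in step (ii) of the generic fibre of $M\restrict{V}$ with $V\cap(\gamma+TJ)$: one must check that $M^{-1}(M(\gamma))\cap V$ differs from $V\cap(\gamma+TJ)$ only by finitely many translates (coming from the finite group $\ker M / TJ$, i.e. the torsion in $\ker M$), so that the two have the same dimension, and that the locus where this holds, intersected with the locus where the fibre dimension is generic, is still Zariski-open and can be taken as $V_J$. The hypothesis that $J$ is \emph{connected} is exactly what lets us take $TJ = (\ker M)^0$ and keep this discrepancy finite. Everything else is a direct application of the fibre dimension theorem together with the rotundity inequality and the elementary fact $\dim J = n - \rk M$ for $J$ given by $y^M = 1$.
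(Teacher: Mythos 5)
Your proposal is correct and follows essentially the same argument as the paper: view $V\cap(\gamma+TJ)$ as (a component of) a fibre of the map $V\to M\cdot V$, apply the fibre dimension theorem to get generic fibre dimension $n-\dim(M\cdot V)$, and use rotundity plus $\dim J = n-\rk M$ to conclude. Your extra care about the finite discrepancy between $\ker M$ and its identity component $TJ$ is a point the paper's proof passes over silently, and it is handled correctly (only the inequality $\le$ is needed, so the finite union of cosets causes no harm).
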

\begin{proof}
The intersection $V \cap \gamma{+}TJ$ is a fibre of the regular map $V \to M\cdot V$. So by the fibre dimension theorem there is a Zariski-open $V_J$ such that for $\gamma \in V_J$ we have $\dim (V \cap \gamma{+}TJ) = n - \dim M\cdot V$. Since $V$ is rotund, $\dim M\cdot V \ge \rk M = n-\dim J$. Thus for $\gamma \in V_J$ we have $\dim (V \cap \gamma{+}TJ) \le \dim J$ as required. 
\end{proof}

For a subgroup $\h$ of $\gm(\C)$, recall that $\Gamma_\h = \class{(x,y)\in G(F)}{\frac{y}{\exp(x)} \in \h}$ and $\C_\h = \tuple{\C;+,\cdot,\Gamma_\h}$.

\begin{prop}\label{density implies G-closed}
  If $\h$ is any subgroup of $\gm(\C)$ which is dense in the complex
  topology then $\C_\h$ is $\Gamma$-closed.
\end{prop}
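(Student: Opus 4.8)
To verify $\Gamma$-closedness of $\C_\h$, I need to show that for each $n$ and each free, rotund, irreducible subvariety $V \subs G^n$ of dimension $n$ defined over $\C$, the intersection $\Gamma_\h^n \cap V$ is non-empty. Unwinding the definition, $\gamma = (b, y) \in \Gamma_\h^n$ means $y/\exp(b) \in \h^n$, i.e.\ $\gamma$ lies in the coset $(b, \exp(b)) + (\{0\}\cross \h)^n$ of the graph $\G$ of $\exp$, translated by an element of $\h^n$. So I must find a point $(b, \exp(b)) \in \G^n$ and an $h \in \h^n$ with $(b, h\cdot\exp(b)) \in V$. The idea is to pick a generic point $\gamma_0 \in V(\C)$, consider the subvariety $V$ near it, and exploit the density of $\h$ to nudge $\gamma_0$ onto $\G^n$ by a small $\h$-translation.

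**Key steps.** First I would fix a generic point $\gamma_0 = (b_0, y_0) \in V(\C)$, lying in the open dense subsets $V_J$ supplied by Lemma~\ref{rotund fibre lemma} for all the relevant connected subgroups $J$ of $\gm^n$ (there are only countably many, and we can also use that we only need $\gamma_0$ to avoid certain lower-dimensional subvarieties). Next, consider the translate $V - (0, y_0/\exp(b_0))$, or more geometrically: look at the intersection of $V$ with the graph $\G^n$ after an $\h$-translation. Using that $\ga(\C) = \C$ is a complex manifold on which $\exp$ is a local biholomorphism, I would set up a transversality/dimension count: since $\dim V = n$ and $\G^n$ is the image of the $n$-dimensional complex submanifold $\{(b, \exp(b))\}$, and since $V$ is free (so $V$ is not trapped in any translate of a proper $TJ$), the intersection $(V + (0,h)) \cap \G^n$ should be nonempty for suitable $h$, provided $V$ genuinely meets a neighbourhood of $\G^n$ under $\h$-translation. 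Here is where density enters: the set of translates $(0,h)$, $h \in \h^n$, is dense in $(\{0\}\cross \gm(\C))^n$, so I can choose $h$ so that $\gamma_0 + (0,h^{-1}\cdot(\text{something}))$ — more precisely, choose $h \in \h^n$ approximating $y_0/\exp(b_0)$ closely enough — and then invoke the implicit function theorem / a local analytic solution: the equation "$(b, h\cdot\exp(b)) \in V$" has a solution $b$ near $b_0$ because at $(b_0, y_0)$ the relevant Jacobian is nondegenerate (this nondegeneracy is exactly the freeness and $\dim V = n$ conditions, packaged via the fibre dimension theorem as in Lemma~\ref{rotund fibre lemma}), and the perturbation from replacing $y_0$ by $h\cdot\exp(b_0)$ is small.

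**Main obstacle.** The hard part is making the transversality/local-solvability argument precise: one must show that at the generic point $\gamma_0$, the intersection of $V$ with the (complex-analytic, non-algebraic) set $\G^n$ is genuinely transverse of the right dimension, so that a small $\h$-perturbation of the fibre still hits it. The rotundity of $V$ controls the algebraic fibre dimensions (Lemma~\ref{rotund fibre lemma} is exactly this), but translating that into transversality with $\G^n$ requires the Ax--Schanuel property to rule out degenerate coincidences — the statement that $V$ meets $\G^n$ in a set of the expected dimension $0$ near a generic point, rather than in a positive-dimensional analytic piece, is where Ax--Schanuel (as flagged in the section preamble) must be used, together with the fibre dimension theorem to handle the finitely many subgroups $J$ through which an accidental higher-dimensional intersection could factor. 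Once transversality is secured, density of $\h$ gives the required $h$ by an open-mapping/continuity argument, and freeness ensures we land in $\Gamma_\h^n$ rather than in a proper subgroup coset, completing the proof.
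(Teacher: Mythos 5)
Your plan is essentially the paper's proof: choose a point of $V$ in the regular locus and avoiding all the (countably many) bad loci $V\setminus V_J$, show that the local analytic intersection of $V$ with the relevant translate of $\G^n$ is zero-dimensional (this is where Ax's theorem is applied, to the field of germs on that analytic set, and compared against Lemma~\ref{rotund fibre lemma}), and then use the inverse function theorem together with density of $\h^n$ to land a nearby point in $\Gamma_\h^n\cap V$. The only caveats are that you flag rather than execute the zero-dimensionality step, and that freeness plays no role in the argument --- rotundity alone is what Lemma~\ref{rotund fibre lemma} uses.
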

\begin{proof}\footnote{Thanks to Margaret Friedland who gave a helpful answer on MathOverflow to help fill a gap in an earlier version of this proof. \url{https://mathoverflow.net/q/343181} } 
  Let $V$ be an irreducible, rotund subvariety of $G^n(\C)$, of
  dimension $n$. Let $a$ be a regular point of $V(\C)$, and let $U$ be
  a small neighbourhood of $a$ in $V(\C)$, with respect to the complex topology. We may take $U$ to be
  analytically diffeomorphic to an open ball in $\C^n$ because $a$ is a regular point. 
  Let $\theta: G^n(\C) \to \gm^n(\C)$ be the map given by 
  \[\theta(x_1,\ldots,x_n,y_1,\ldots,y_n) = \left(\frac{y_1}{\exp(x_1)},\ldots,\frac{y_n}{\exp(x_n)}\right).\]   
  
  Let $t = \theta(a) \in \gm^n(\C)$ and let $A = \theta^{-1}(t) \cap U$. Then $A$
  is an analytic subset of $U$, so, by taking $U$ sufficiently small,
  we may assume that $A$ is connected. Thus $A$ is a connected neighbourhood of $a$ in the intersection $a{+}\G^n \cap V(\C) \subs G^n(\C)$.

  Suppose that $\dim A = 0$. Then, since it is connected, $A$ is the singleton $\{a\}$, and indeed $\{a\}$ is an isolated point of $\theta^{-1}(t)$.
 So, shrinking $U$ if necessary, $\theta\restrict{U}$ is finite-to-one \cite[p257]{Lojasiewicz}. Then, by the Remmert open mapping theorem \cite[p297]{Lojasiewicz}, $\theta(U)$ is open in $\gm^n(\C)$.
  Now $\h^n$ is dense in $\gm^n(\C)$ by assumption, so there is $c \in \theta(U) \cap \h^n$. 
  Then $\theta^{-1}(c) \in \Gamma_\h^n \cap U \subs \Gamma_\h^n \cap V$. 
  Thus $\Gamma_\h^n \cap V$ is nonempty as required.

\medskip
 So it remains to show that we can find a regular point $a \in V(\C)$ such that $\dim A = 0$. (Although $A$ depends on the choice of $a$ and the choice of neighbourhood $U$, we will suppress these dependences from the notation.)

Suppose that $\dim A = d > 0$.
Let $F$ be the field of germs at $a$ of complex analytic functions $A \to \C$. In $F$ we have the restrictions to $A$ of the coordinate functions $x_i$, $y_i$ on $G^n$, where the $x_i$ are the coordinates from $\ga^n$ and the $y_i$ are the coordinates from $\gm^n$. Since $\dim A = d$, there is a $d$-dimensional vector space $\Delta$ of $\C$-derivations on $F$. Since $A \subs \theta^{-1}(t)$, we have $y_i = t_i\exp(x_i)$ for each $i$. So for each $i=1,\ldots,n$ and each derivation $D \in \Delta$ we have $D y_i = y_i D x_i$. Let $r = \ldim_\Q(x_1,\ldots,x_n/\C)$ so we can choose $M \in \Mat_n(\Z)$, a matrix of rank $n-r$, and $\gamma \in G^n(\C)$  such that $(Mx,y^M) = \gamma$. Write $J$ for the connected component of the algebraic subgroup of $\gm^n$ given by $y^M = 1$ and $TJ$ for the connected component of the algebraic subgroup of $G^n$ given by $Mx = 0$ and $y^M = 1$. So $\dim J = r$ and $(x,y) \in \gamma{+}TJ$.

Then $(x,y)$ is an $F$-point of the algebraic variety $V \cap \gamma{+}TJ$, which is defined over $\C$, so $\td(x,y/\C) \le \dim V \cap  \gamma{+}TJ$.
By Ax's theorem~\cite[Theorem 3]{Ax71} we have $\td(x,y/\C) - r \ge d$ and so 
\[\dim (V \cap  \gamma{+}TJ) \ge d+ \dim J > \dim J.\]
 By Lemma~\ref{rotund fibre lemma}, this can only occur if $v \notin V_J$.

Let $V^o =  V^{\mathrm{reg}}(\C) \cap \bigcap_{J \le \gm^n} V_J(\C)$, where $V^{\mathrm{reg}}$ is the set of regular points in $V$, which is Zariski-open in $V$. Each $V \minus V_J$ is a Zariski-closed subset of $V$ of lower dimension, and there are only countably many algebraic subgroups $J$, so $V^o$ is non-empty. Taking any $v \in V^o$ we have that $\dim A = 0$, which completes the proof.
\end{proof}

\begin{remark} 
We could use the uniform version of Ax's theorem \cite[Theorem~4.3]{TEDESV} to see that only finitely many groups $J$ need to be considered, so $V^o$ is in fact Zariski-open in $V$. However we do not need that for the proof.
\end{remark}

\section{Proofs of the main theorems}\label{proofs section}

In this section we bring together the results of the paper to prove the main theorems.
\begin{prop}\label{models of TEDE*}
  The blurred exponential fields $\CBE$ and $\BBE$ are models of the theory $\TEDE^*$.
\end{prop}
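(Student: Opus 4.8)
**Proof proposal for Proposition~\ref{models of TEDE*}.**

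The plan is to verify that $\CBE$ and $\BBE$ satisfy each axiom in the list defining $\TEDE^*$: the $\TEDE$ axioms of Fact~\ref{TEDE axioms} minus non-triviality (namely $\mathbf{ACF_0}$, Group, Fibres, Ax-Schanuel, and full $\Gamma$-closedness), together with CCP and the Constant field axiom. Both structures are blurrings $F_\h$ of an exponential field with $\h = \gm(C)$, where $C = \ecl^F(\emptyset)$; the field $\C$ is algebraically closed and so is $\B$ by construction, which gives $\mathbf{ACF_0}$, and $\Gamma_\h$ is visibly a subgroup of $G(F)$, which gives Group. For the Fibres axiom I would compute directly from the definition of $\Gamma_\h$: $(x,1)\in\Gamma_\h$ iff $1/\exp(x)\in\gm(C)$ iff $\exp(x)\in\gm(C)$, and since $C = \ecl^F(\emptyset)$ is relatively $\ecl$-closed, this pins the fibre down to $x \in \ga(C)$; similarly $(0,y)\in\Gamma_\h$ iff $y\in\gm(C)$. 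That $C$ is algebraically closed is part of its being an $\ecl$-closed subfield. For the Constant field axiom I need $\td(C/\Q)$ infinite; this holds because $\ecl^F(\emptyset)$ contains, for instance, the exponentially algebraic numbers such as $e$, $e^e$, and so on — more carefully, one cites that $\ecl^F(\emptyset)$ is an infinite-dimensional $\ecl$-closed field, which follows from the Schanuel-type axioms already known to hold in $\Cexp$ (via CCP/Ax-Schanuel) and in $\Bexp$ (by construction).

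The remaining three axioms are where the real content lies, and each is supplied by an earlier result applied with $\h = \gm(C)$ and $C = \ecl^F(\emptyset)$. The Ax-Schanuel property is exactly Lemma~\ref{blurred Ax-Schanuel}, whose hypothesis (that $C$ is a relatively $\ecl$-closed subfield) is met by the choice $C = \ecl^F(\emptyset)$. CCP follows from Corollary~\ref{CCP cor}: one needs that $\Fexp$ has CCP — true for $\Cexp$ by Fact~\ref{CCP theorem}, and for $\Bexp$ by its axioms — and that $\h = \gm(\ecl^F(\emptyset))$ is countable, which holds because $\ecl^F(\emptyset)$ is the $\ecl$-closure of a finite (indeed empty) set and $\ecl$ has the countable closure property. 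Note that Proposition~\ref{blurred pregeom} identifies the relevant pregeometry $\Gcl$ on $F_\h$ with $\ecl^F$ in this case, since $\h = \gm(C) \subs \ecl^F(\emptyset)$; this identification is what lets us transfer the countable-closure and infinite-dimensionality statements.

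The main obstacle is full $\Gamma$-closedness for $\CBE$ and $\BBE$. For $\CBE$ this is where Proposition~\ref{density implies G-closed} enters: one checks that $\h = \gm(C)$ with $C = \ecl^\C(\emptyset)$ is dense in $\gm(\C)$ in the complex topology — indeed $\ga(C)$ contains $\Q + 2\pi i\Q$ or at least a dense additive subgroup, so $\gm(C) = \exp(\ga(C))$ is dense — and then Proposition~\ref{density implies G-closed} gives $\Gamma$-closedness, which for a full $\Gamma$-field is equivalent to full $\Gamma$-closedness. I would also need to confirm fullness of $\CBE$ as a $\Gamma$-field, i.e.\ $\pi_1(\Gamma_\h) = \ga(\C)$ and $\pi_2(\Gamma_\h) = \gm(\C)$, which is immediate since $\exp$ is surjective onto $\gm(\C)$. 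For $\BBE$ the cleanest route is to invoke $\Gamma$-closedness of $\Bexp$ together with the fact that blurring by a subgroup only enlarges $\Gamma$, so $\Gamma^n \cap V \subs \Gamma_\h^n \cap V$ and non-emptiness is inherited; one must check this blurring genuinely preserves the full $\Gamma$-field structure and that $\Bexp$ is itself full, both of which are part of its axiomatization. Once all seven axioms are verified for both structures, the proposition follows.
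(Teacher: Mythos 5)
Your verification of $\mathbf{ACF_0}$, Group, Fibres, fullness, Ax-Schanuel (via Lemma~\ref{blurred Ax-Schanuel}), CCP (via Corollary~\ref{CCP cor}), and $\Gamma$-closedness (density plus Proposition~\ref{density implies G-closed} for $\CBE$; inheritance from $\G\subs\GBE$ and the strong $\Gamma$-closedness of $\Bexp$ for $\BBE$) matches the paper's proof step for step, and the extra detail you supply (the density of $\gm(C)\sups\exp(\Q+2\pi i\Q)$, the purity computation behind the Fibres axiom) is correct.

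There is, however, a genuine gap in your treatment of the Constant field axiom for $\CBE$. You propose to witness $\td(C/\Q)=\infty$ by the numbers $e, e^e, e^{e^e},\dots$, ``which follows from the Schanuel-type axioms already known to hold in $\Cexp$.'' But no such axioms are known for $\Cexp$: the algebraic independence of $e, e^e, e^{e^e},\dots$ is an instance of Schanuel's conjecture, which is open (it is not even known that $e^e$ is transcendental over $\Q(e)$), and neither the Ax-Schanuel theorem (a statement about differential fields / germs of functions, not about specific complex numbers) nor the CCP yields any lower bound on $\td(\ecl^\C(\emptyset)/\Q)$. These iterated exponentials do all lie in $C$, but without independence that gives no bound on transcendence degree. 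The correct unconditional input, which the paper uses, is the Lindemann--Weierstrass theorem: $\Q^\alg\subs C$ and $C$ is closed under $\exp$, and since $\Q^\alg$ is infinite-dimensional as a $\Q$-vector space, Lindemann--Weierstrass forces $\exp(\Q^\alg)$, hence $C$, to have infinite transcendence degree. (For $\BBE$ your argument is fine, since the Schanuel property is an axiom of $\Bexp$ and does give the algebraic independence of the iterated exponentials of $1$.) With that one repair your proof is complete and coincides with the paper's.
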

\begin{proof}
  The axioms $\ACF_0$, Group, and Fibres are immediate from the
  definition. The Ax-Schanuel property is given by Lemma~\ref{blurred Ax-Schanuel}.
The CCP holds by Corollary~\ref{CCP cor}. 

$\CBE$ and $\BBE$ are full $\Gamma$-fields because the exponential maps are surjective. $\CBE$ is $\Gamma$-closed by Proposition~\ref{density implies G-closed}. To show that $\BBE$ is $\Gamma$-closed, let $V \subs G^n$ be rotund and irreducible. Since $\Bexp$ is strongly $\Gamma$-closed, $\G^n \cap V \neq \emptyset$.  But $\G \subs \GBE$, so $\GBE^n \cap V \neq \emptyset$. 

It remains to show that $\td(C/\Q)$ is infinite. For $\B$, this follows at once from the Schanuel property since $C$ contains all the numbers $\exp(1)$, $\exp(\exp(1))$, $\exp(\exp(\exp(1))),\ldots$, which are algebraically independent.

For $\C$ we cannot use the Schanuel property, but  the Lindemann-Weierstrass theorem is enough. This is the
  special case of the Schanuel property for algebraic numbers and their exponentials. Since
  the field of algebraic numbers is infinite dimensional as a
  $\Q$-vector space, its image under $\exp$ must have infinite
  transcendence degree.
\end{proof}

\begin{proof}[Proof of Theorem~\ref{blurred exp theorem}] \
\begin{enumerate}
\item Proposition~\ref{models of TEDE*} shows that $\BBE$ and $\CBE$ both satisfy the theory $\TEDE^*$.
They are both of cardinality $2^{\aleph_0}$ so, by  Proposition~\ref{qme prop} items~(2) and~(3), they are isomorphic to each other and quasiminimal.

\item Let $\Fdiff$ be the prime model of $\DCF_0$ over $C \cup B$ where $C$ is a constant field of transcendence degree $\aleph_0$ and $B$ is a differentially independent set of size continuum. Then in the reduct $\FEDE$, $B$ is an independent set in the generic type and $\FEDE$ is a prime model of $\TEDE$ over $C \cup B$. So by Proposition~\ref{qme prop}, $\FEDE$ is a model of $\TEDE^*$, and hence is isomorphic to $\CBE$ and $\BBE$.

\item Since $\DCF_0$ is $\aleph_0$-stable, so is its reduct $\FEDE$.

\item As discussed before, the unique model of cardinality continuum of $\TEDE^*$ is constructed by the amalgamation-with-predimension method (for the countable model) and then the excellence method (to build uncountable models as directed limits of countable models) in Section~9.7 of \cite{PEM}, where it is written as $\mathbb M_{\mathrm{\Gamma\text{-}tr}}(C)$.
\end{enumerate}
\end{proof}

\begin{proof}[Proof of Theorem~\ref{pregeom theorem}]
By Proposition~\ref{blurred pregeom}, the pregeometries $\ecl^\C$ and $\Gcl^{\CBE}$ are equal, as are $\ecl^\B$ and $\Gcl^{\BBE}$. Theorem~\ref{pregeom theorem} then follows from Theorem~\ref{blurred exp theorem}.
\end{proof}

We prove Theorem~\ref{CAE is qm} in slightly more generality, since the only relevant properties of the group $\h = \exp(\Q + 2\pi i \Q)$ are that it is countable and dense.
\begin{theorem}
Let $\h \subs \gm(\C)$ be a countable subgroup which is dense in the complex topology. Then the structure $\C_\h \leteq \tuple{\C;+,\cdot,\Gamma_\h}$ is quasiminimal. In particular, $\CAE$ is quasiminimal.
\end{theorem}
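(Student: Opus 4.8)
The plan is to deduce the theorem from Fact~\ref{G-closed + CCP implies qm}: a full $\Gamma$-field that is $\Gamma$-closed and has the countable closure property is quasiminimal. So it suffices to check, for a countable dense subgroup $\h \subs \gm(\C)$, that $\C_\h$ is (i) a full $\Gamma$-field, (ii) $\Gamma$-closed, and (iii) has the CCP, and then to verify these hypotheses for the particular group $\h = \exp(\Q + 2\pi i \Q)$ giving $\CAE$.

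For (i): the subgroup $\Gamma_\h \subs G(\C)$ contains the graph $\G$ of $\exp_\C$, and the map $x \mapsto (x,\exp_\C x)$ identifies $\ga(\C)$ with $\G$, so $\G$ is divisible; since $\Gamma_\h/\G \cong \h$, the group $\Gamma_\h$ is divisible provided $\h$ is. (This divisibility is part of $\C_\h$ being a $\Gamma$-field and holds automatically in the application, since $\exp(\Q + 2\pi i\Q)$ is the image under a homomorphism of the $\Q$-vector space $\Q + 2\pi i\Q$.) Fullness is then immediate: $\C$ is algebraically closed, $\pi_1(\Gamma_\h) = \ga(\C)$ because this already holds for $\G$, and $\pi_2(\Gamma_\h) = \gm(\C)$ because $\exp_\C$ is surjective. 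For (ii): this is exactly Proposition~\ref{density implies G-closed}, whose sole hypothesis is that $\h$ be dense in the complex topology. For (iii): $\Cexp$ has the CCP by Fact~\ref{CCP theorem}, and $\h$ is countable, so Corollary~\ref{CCP cor} applies. Hence Fact~\ref{G-closed + CCP implies qm} gives that $\C_\h$ is quasiminimal.

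It remains to treat $\CAE$, i.e.\ to check the hypotheses for $\h = \exp(\Q + 2\pi i\Q)$. The group $\Q + 2\pi i\Q$ is a countable subgroup of $\ga(\C)$, dense in the Euclidean topology, and a $\Q$-vector space, hence divisible; and $\exp_\C \colon \ga(\C) \to \gm(\C)$ is a continuous surjective group homomorphism, so it carries dense sets to dense sets. Therefore $\h$ is a countable, dense, divisible subgroup of $\gm(\C)$, and the general statement applies to give that $\CAE = \C_\h$ is quasiminimal.

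I do not expect any genuine obstacle in this argument: the content has already been extracted, in Proposition~\ref{density implies G-closed} --- which rests on Ax's theorem \cite{Ax71} together with the fibre dimension theorem to force the relevant analytic intersections to be transversal and of the right dimension --- and in the implication ``$\Gamma$-closed $+$ CCP $\Rightarrow$ quasiminimal'' coming from \cite{PEM} via generic strong $\Gamma$-closedness (Fact~\ref{G-closed equivalences}, Fact~\ref{G-closed + CCP implies qm}). The only work left for this theorem is to verify that those earlier results do apply to the blurred structure $\C_\h$, which amounts to the straightforward checks above; if anything is delicate it is merely bookkeeping the divisibility and fullness conditions so that $\C_\h$ genuinely counts as a full $\Gamma$-field.
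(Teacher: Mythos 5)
Your proof is correct and follows essentially the same route as the paper: CCP via Fact~\ref{CCP theorem} and Corollary~\ref{CCP cor}, $\Gamma$-closedness via Proposition~\ref{density implies G-closed}, and then Fact~\ref{G-closed + CCP implies qm}. Your additional check that $\Gamma_\h$ is divisible (so that $\C_\h$ really is a full $\Gamma$-field) is a detail the paper leaves implicit, and it is worth recording, since divisibility of $\Gamma_\h$ does require divisibility of $\h$, which holds for $\exp(\Q+2\pi i\Q)$ because it is the homomorphic image of a $\Q$-vector space.
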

\begin{proof}
By Theorem~\ref{CCP theorem}, $\Cexp$ has the countable closure property, and by Corollary~\ref{CCP cor} so does $\C_\h$.
By Proposition~\ref{density implies G-closed}, $\C_\h$ is $\Gamma$-closed. Thus by Fact~\ref{G-closed + CCP implies qm}, $\C_\h$ is quasiminimal.
\end{proof}

\section{Final remarks}\label{remarks section}

We finish with some remarks about what the results of this paper say towards proving quasiminimality for $\Cexp$ itself and related structures.

First observe that $\CAE$ is a reduct of $\Cexp$. For $\Q$ is defined in $\Cexp$ by the formula 
\[\exists y_1 \exists y_2 [e^{y_1} = 1 \wedge e^{y_2} = 1 \wedge x \cdot y_1 = y_2 \wedge y_1 \neq 0]\]
and $2 \pi i \Q$ is defined (without the parameter for $2\pi i$) by
\[\exists y z[z \in \Q \wedge e^y = 1 \wedge x = y\cdot z].\]
So $\Q + 2 \pi i \Q$ and $\GAE$ are $\emptyset$-definable in \Cexp.

On the other hand $\Q + 2 \pi i\Q$ is definable in $\CAE$ by $(x,1) \in \GAE$, and $\Q$ is definable as the multiplicative stabilizer of $\Q + 2 \pi i\Q$. So whereas $\CBE$ is $\aleph_0$-stable, $\CAE$ interprets arithmetic, like $\Cexp$. So in stability-theoretic terms, $\CAE$ appears to be close to \Cexp. I cannot prove that it is a proper reduct of \Cexp, but it seems likely because the analogous reduct $\BAE$ of \Bexp\ is a proper reduct. 
To see this, consider the $\Gamma$-subfield $A \subs \BAE$, where as a field $A = \Q(2\pi i,\exp(\Q),\sqrt1)$ and $\Gamma(A) = (\Q + 2 \pi i\Q) \times (\exp(\Q) \cdot \sqrt1)$. By $2 \pi i$ in $\B$ we just mean a cyclic generator of the kernel of the exponential map. Then $A$ is of finite rank as a $\Gamma$-field and is strongly embedded in $\BAE$, so every automorphism of $A$ extends to an automorphism of $\BAE$. Since $2 \pi i$ and $\exp_\B(1)$ are algebraically independent, there is an automorphism of $A$ fixing $\Q(\exp(\Q),\sqrt1)$ and sending $2 \pi i$ to $2 \pi i q$ for any given $q \in \Q \minus \{0\}$. However in $\Bexp$ the pair $\pm 2 \pi i$ is defined by
\[e^x=1 \wedge \left(\forall y \right)
\left(\exists n\in \Z\right)
\left[e^y = 1 \to nx=y\right]\]
where $\Z$ is defined by 
\[\forall y [e^y=1 \to e^{xy} =1].\]
So $\BAE$ has more automorphisms than $\Bexp$, hence it is a proper reduct.

An obvious question to ask is whether $\C_\h$ can be shown to be quasiminimal when $\h$ is a smaller group than $\exp(\Q + 2 \pi i \Q)$, for example $\exp(\Q)$ or the torsion group $\gm(\mathrm{tors}) = \exp(2 \pi i \Q)$. Of course taking $\h=\{1\}$ would give $\Cexp$ itself. The arguments of this paper immediately break down, because $\exp(\Q + 2 \pi i \Q)$ has no proper subgroups which are both divisible and dense in $\gm(\C)$.

\medskip 

It seems likely that the methods of this paper can also be used to prove quasiminimality for reducts of the complex field with a Weierstrass $\wp$-function. 
A complex elliptic curve $E$ has an exponential map of the form $\exp_E(z) = [\wp(z):\wp'(z):1]$. By analogy with $\GAE$ we can define a subgroup $\Gamma_{\mathrm A\wp} \subs \ga(\C) \cross E(\C)$ by $(x,y) \in \Gamma_{\mathrm A\wp}$ if $y-x \in E(\mathrm{tors})$.
We can also define a correspondence between two complex elliptic curves $\Gamma_{\mathrm{corr}} \subs E_1(\C) \cross E_2(\C)$ by $
\Gamma_{\mathrm{corr}} = \class{(\exp_{E_1}(z), \exp_{E_2}(z))}{z \in \C}$. I conjecture the following.
\begin{conj} \
 The structures $\C_{\mathrm A\wp} = \tuple{\C;+,\cdot,\Gamma_{\mathrm A\wp}}$ and
 $\C_{\mathrm{corr}} = \tuple{\C;+,\cdot,\Gamma_{\mathrm{corr}}}$ are quasiminimal.
\end{conj}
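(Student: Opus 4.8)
The plan is to run the scheme behind Theorems~\ref{blurred exp theorem} and~\ref{CAE is qm} with the algebraic group $G = \ga\cross\gm$ replaced by $\ga\cross E$ (for $\C_{\mathrm A\wp}$) and by $E_1\cross E_2$ (for $\C_{\mathrm{corr}}$), and the exponential map replaced by the elliptic exponential $\exp_E(z) = [\wp(z):\wp'(z):1]$. Here $\Gamma_{\mathrm A\wp}$ is the graph of $\exp_E$ blurred by the subgroup $E(\mathrm{tors})$ of $E(\C)$ --- and $E(\mathrm{tors})$ is precisely the $\exp_E$-image of the $\Q$-span of the period lattice, the exact analogue of $\exp(\Q+2\pi i\Q)$ --- while $\C_{\mathrm{corr}}$ is the $\Gamma$-field over $E_1\cross E_2$ whose distinguished subgroup is $\Gamma_{\mathrm{corr}}$ itself, a $1$-dimensional but Zariski-dense divisible subgroup, namely the image of $\C$ under $z\mapsto(\exp_{E_1}(z),\exp_{E_2}(z))$. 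The $\Gamma$-field machinery of \cite{TEDESV} and \cite{PEM} --- predimension, rotundity, freeness, the Ax--Schanuel property, and the implication ``full $\Gamma$-field $+$ $\Gamma$-closed $+$ CCP $\Rightarrow$ quasiminimal'' --- is already developed for arbitrary (semi)abelian varieties, so the task is to verify, in the elliptic setting, that $\C_{\mathrm A\wp}$ and $\C_{\mathrm{corr}}$ are full $\Gamma$-fields satisfying Ax--Schanuel, CCP and $\Gamma$-closedness, and then to invoke the general form of Fact~\ref{G-closed + CCP implies qm}.

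Fullness, Ax--Schanuel and CCP transfer with little trouble. Both $\Gamma_{\mathrm A\wp}$ and $\Gamma_{\mathrm{corr}}$ are divisible --- an $n$-division point of a torsion point of $E$ is again torsion, and $\Gamma_{\mathrm{corr}}$ is the homomorphic image of the divisible group $\C$ --- and each surjects onto its factors since $\exp_E$ is surjective, so, $\C$ being algebraically closed, both are full $\Gamma$-fields. The Ax--Schanuel property is Ax's theorem for $\ga\cross E$ and for $E_1\cross E_2$, with the usual proviso that when the curves are isogenous or have complex multiplication the corresponding extra algebraic subgroups must be admitted; the blurred versions then follow formally as in Lemma~\ref{blurred Ax-Schanuel}, since $E(\mathrm{tors})$ and $\Gamma_{\mathrm{corr}}$ lie inside $\ecl(\emptyset)$ (torsion points, and the correspondence over the coefficient field of the curves, are already algebraic over $\emptyset$). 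The CCP follows from the elliptic Ax--Schanuel inequality by the argument behind Fact~\ref{CCP theorem}, together with the ``blurring preserves CCP'' step of Corollary~\ref{CCP cor}.

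The heart of the matter, and where I expect the real work, is $\Gamma$-closedness, proved by the density method of Section~\ref{density section}. Given a free, rotund, irreducible subvariety $V$ of $(\ga\cross E)^n$, resp.\ $(E_1\cross E_2)^n$, of the critical dimension $n$, one uses the fibre-dimension theorem together with the elliptic Ax--Schanuel bound --- just as in Lemma~\ref{rotund fibre lemma} and Proposition~\ref{density implies G-closed} --- to show that at a generic regular point $a$ of $V(\C)$ the local leaf of the relevant exponential foliation through $a$ meets $V$ only at $a$. By the inverse function theorem the corresponding ``blurring projection'' $\theta$ is then a local homeomorphism on a small neighbourhood $U$ of $a$, so $\theta(U)$ is open, and it is enough that the blurring group be dense in the complex topology. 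For $\C_{\mathrm A\wp}$ this is immediate: $\theta(x,y) = y-\exp_E(x)$ takes values in $E(\C)$, and $E(\mathrm{tors})^n \cong (\Q/\Z)^{2n}$ is dense in $E^n(\C) \cong (\R/\Z)^{2n}$, so this case goes through essentially verbatim. For $\C_{\mathrm{corr}}$ the only density available is that of $\Gamma_{\mathrm{corr}}$ itself in $(E_1\cross E_2)(\C)$, equivalently that $\Lambda_1+\Lambda_2$ be dense in $\C$ (and then the fibre of $\Gamma_{\mathrm{corr}}\to E_1$ over the identity, namely $\exp_{E_2}(\Lambda_1) = (\Lambda_1+\Lambda_2)/\Lambda_2$, is dense in $E_2(\C)$, forcing the closure of $\Gamma_{\mathrm{corr}}$ to be everything): this holds for generic pairs $E_1,E_2$ but can fail for special non-isogenous pairs, while for isogenous pairs $\Gamma_{\mathrm{corr}}$ is algebraic (the graph of an isogeny), so $\C_{\mathrm{corr}}$ is just the algebraically closed field and is strongly minimal. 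Even granting density, the projection $\theta$ degenerates for $\C_{\mathrm{corr}}$, so a genuinely new open-image argument is needed; this is the crux of that case and, I suspect, why it remains open.

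The obstacles I therefore anticipate are: (i) the bookkeeping of rotundity and freeness over the larger lattice of algebraic subgroups of $(\ga\cross E)^n$ and $(E_1\cross E_2)^n$, where isogenies and complex multiplication create extra subgroups and ``rotund'' must be taken in the correct (semi)abelian sense of \cite{TEDESV}; (ii) carrying out the density step for $\C_{\mathrm{corr}}$ --- finding the right open-image argument when $\Gamma_{\mathrm{corr}}$ is dense, and dealing with the special non-dense configurations; and (iii) confirming that the ``$\Gamma$-closed $+$ CCP $\Rightarrow$ quasiminimal'' machinery of \cite{PEM} goes through unchanged for these non-toric $\Gamma$-fields. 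Once all the ingredients are in place, the quasiminimality of $\C_{\mathrm A\wp}$ and $\C_{\mathrm{corr}}$ follows exactly as Theorem~\ref{CAE is qm} followed from Fact~\ref{G-closed + CCP implies qm}.
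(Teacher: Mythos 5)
This statement is labelled a \emph{conjecture} in the paper: the author explicitly writes ``I conjecture the following'' and offers no proof, only the remark that it would follow from a generalization of Zilber's weak quasiminimality conjecture. So there is no argument in the paper to compare yours against, and your proposal should be judged as an attempted proof of an open problem. As such it is not a proof but a strategy sketch, and you say so yourself: the decisive steps are left unverified. Concretely, what is missing is (a) the elliptic analogues of the imported machinery --- the pregeometry and the implication ``full $\Gamma$-field $+$ $\Gamma$-closed $+$ CCP $\Rightarrow$ quasiminimal'' are proved in the cited references for specific classes of $\Gamma$-fields, and checking that $(\ga\cross E)^n$ and $(E_1\cross E_2)^n$ with their full lattices of algebraic subgroups (isogenies, CM) fall under those hypotheses is real work, not a citation; (b) the CCP for the Weierstrass setting, which you assert ``follows by the argument behind Fact~\ref{CCP theorem}'' without carrying it out; and (c), most seriously, the $\Gamma$-closedness of $\C_{\mathrm{corr}}$, where you correctly observe that the density hypothesis driving Proposition~\ref{density implies G-closed} can simply fail (e.g.\ $\Lambda_1=\Z+i\Z$, $\Lambda_2=\sqrt2\Z+i\Z$ gives $\overline{\Lambda_1+\Lambda_2}=\R+i\Z\neq\C$) and that even when it holds a new open-image argument is needed. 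A step that is known to fail for some instances of the statement is a gap, not an anticipated obstacle.

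One smaller inaccuracy: $E(\mathrm{tors})=\exp_E(\Q\Lambda)$ is the analogue of the torsion blurring $\exp(2\pi i\Q)=\gm(\mathrm{tors})$, not of $\exp(\Q+2\pi i\Q)$; the paper points out that the torsion blurring of $\gm$ is exactly a case where its arguments break down, because $\gm(\mathrm{tors})$ is not dense in $\gm(\C)$. Your case for $\C_{\mathrm A\wp}$ survives only because $E(\C)$ is compact, so its torsion \emph{is} dense --- this is the genuinely new feature of the elliptic situation and deserves to be said explicitly rather than presented as ``verbatim'' transfer. Overall: your plan for $\C_{\mathrm A\wp}$ is the natural one and plausibly completable with the Ax--Schanuel theorem for $\ga\cross E$ and the density of torsion, but as written neither case is proved, and the paper itself leaves both open.
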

These conjectures would immediately follow from the generalization of Zilber's weak conjecture:

\begin{conj}
The complex field expanded by exponentiation and the maps $\exp_E$ for any countable set of complex elliptic curves $E$ is quasiminimal.
\end{conj}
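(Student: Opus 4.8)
The plan is to run the Zilber program exactly as in this paper, but over the family $\mathcal S$ of semiabelian varieties generated from $\gm$ and the given elliptic curves $E_i$ by taking products and algebraic subgroups, with no blurring. For each $S\in\mathcal S$ of dimension $d$ the analytic covering $\C^d\to S(\C)$ has graph a transcendental subgroup $\G_S$ of the tangent bundle $TS(\C)$; together these make $\tuple{\C;+,\cdot,\exp,(\exp_{E_i})_{i}}$ a ``$\Gamma$-field over $\mathcal S$'' in the sense already used in \cite{TEDESV}. The first task is to generalise the quasiminimal-pregeometry-class machinery of \cite{PEM} and \cite{OQMEC} from $\mathcal S=\class{\gm^n}{n\in\N}$ to an arbitrary such $\mathcal S$; this should be routine, since \cite{TEDESV} already works at this level of generality. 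Granting it, the semiabelian analogue of Fact~\ref{G-closed + CCP implies qm} reduces the conjecture to showing that the standard model $\C$, with all these covering maps, has the countable closure property and is $\Gamma$-closed. The Ax--Schanuel inequality that is needed for the predimension to define the pregeometry $\Gcl$ is here Ax's theorem for semiabelian varieties, which is known, so that ingredient is already in place.

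For the countable closure property I would imitate Zilber's proof for $\Cexp$ (Fact~\ref{CCP theorem}): on suitable fundamental domains each covering map is definable in the o-minimal structure $\Ran$ ($\exp$ is restricted-analytic on a horizontal strip, and $\wp_i,\wp_i'$ on a fundamental parallelogram, as is the analytic correspondence between two curves once both are restricted to fundamental domains). O-minimality bounds the number of connected components of the relevant solution sets and, combined with Ax's theorem to bound their transcendence degree, yields that $\Gcl(A)$ is countable for finite $A$, exactly as in the pure exponential case. I expect no essential difficulty here.

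The $\Gamma$-closedness of $\C$ with the \emph{honest} covering maps is the real obstacle, and it is the one point at which the density method of Section~\ref{density section} has nothing to offer: one must show that for every $n$ and every free, rotund, irreducible subvariety $V$ of the critical dimension inside a product of tangent bundles $\prod_j TS_j$, the graph $\Gamma=\prod_j\G_{S_j}$ meets $V$. In the blurred setting this worked because the blurring subgroup is divisible and dense, so that after passing to a small regular neighbourhood (Lemma~\ref{rotund fibre lemma}) one could invoke density directly; the true $\Gamma$ is the image of a transcendental analytic map and is nowhere dense, so Proposition~\ref{density implies G-closed} does not apply. This is precisely the exponential-algebraic-closedness property, open already for $\Cexp$ (the case of no elliptic curves), and the elliptic enrichment does not make it easier. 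The lines I would pursue are: valence and degree estimates in the spirit of Brownawell--Masser and of the partial results of D'Aquino, Fornasiero and Terzo on exponential-algebraic closedness, adapted to elliptic periods via the analytic structure of $\wp$; a topological degree argument treating the covering map as a suitably proper analytic surjection onto a Zariski-dense subset of $\prod_j TS_j$ and forcing it to meet $V$; or a Zilber--Pink-type reduction to finitely many special subvarieties to be excluded, followed by a transversality argument at a generic point as in Lemma~\ref{rotund fibre lemma}, but with no density substitute available for the last step. Establishing $\Gamma$-closedness without any density hypothesis is the decisive difficulty.

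Granting these two properties together with Ax--Schanuel, the conclusion follows as in the proof of Theorem~\ref{CAE is qm}: $\tuple{\C;+,\cdot,\exp,(\exp_{E_i})_{i}}$ is a full $\Gamma$-field, since all the covering maps are surjective, it is $\Gamma$-closed and has the countable closure property, so the semiabelian version of Fact~\ref{G-closed + CCP implies qm} (that is, of \cite[Corollary~11.7]{PEM}) makes it quasiminimal, which is the conjecture. The conjectures on $\C_{\mathrm A\wp}$ and $\C_{\mathrm{corr}}$ then follow at once, since each is a reduct of such a structure and quasiminimality passes to reducts. Thus the only genuinely missing input is $\Gamma$-closedness; everything else is either already available in the literature or a routine generalisation of it.
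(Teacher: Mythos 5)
There is a genuine gap, and you have in fact named it yourself: your argument reduces the statement to the $\Gamma$-closedness (exponential-algebraic closedness) of $\C$ with the \emph{honest} covering maps $\exp$ and $\exp_{E_i}$, and you offer no proof of that reduction's hypothesis, only a list of possible lines of attack. That property is precisely the open heart of the matter: already in the case of no elliptic curves it is Zilber's (strong) exponential-algebraic closedness conjecture for $\Cexp$, and the whole point of this paper is that its one new tool for producing points of $\Gamma$ on free rotund varieties --- Proposition~\ref{density implies G-closed} --- works only because the blurring group $\h$ is a divisible subgroup of $\gm(\C)$ that is dense in the complex topology; with $\h=\{1\}$ (the unblurred graph) the argument collapses, as the final remarks of the paper state explicitly. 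So what you have written is not a proof but a reduction of one open conjecture to another open one, plus plausible-but-unestablished claims that the quasiminimal-pregeometry machinery of \cite{PEM}/\cite{OQMEC} and Fact~\ref{G-closed + CCP implies qm} carry over verbatim to an arbitrary family of semiabelian varieties (in this paper that fact is only quoted for $G=\ga\cross\gm$), and that CCP for the expanded structure is routine.

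For calibration: the statement you were asked about is stated in the paper as a conjecture, with no proof offered; the paper only observes that the Ax--Schanuel ingredient is known in this generality (citing \cite{Ax72a}) and that the blurred/approximate analogues are what its density method can actually reach. Your surrounding scaffolding (Ax--Schanuel as the predimension inequality, CCP via o-minimality, the deduction of quasiminimality from $\Gamma$-closedness plus CCP, and the passage to the reducts $\C_{\mathrm A\wp}$ and $\C_{\mathrm{corr}}$) is consistent with how such a proof would be organised, but the decisive step is missing, so the proposal does not establish the conjecture.
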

One could generalize further and replace the elliptic curves by any commutative complex algebraic groups. Almost everything known towards Zilber's conjecture depends one way or another on the Ax-Schanuel theorem, which is known in this generality \cite{Ax72a}.


\end{document}